\documentclass[12pts]{article}
\usepackage[utf8]{inputenc}
\usepackage[T1]{fontenc}
\usepackage{macros}
\usepackage[english]{babel}
\usepackage{amsmath}
\usepackage{tikz-cd}
\usepackage{tikz}
\usepackage{amsthm}
\usepackage{amssymb}
\usepackage{stmaryrd}
\usepackage{amsfonts}
\usepackage{graphicx}

\usepackage{caption}
\usepackage{subcaption}

\usepackage{xspace}
\usepackage[all]{xy}
\usepackage{txfonts}
\usepackage{csquotes}
\usepackage{mathtools}
\usepackage{thmtools}
\usepackage{tdsfrmath}
\usepackage{xfrac}
\usepackage[shortlabels]{enumitem}
\usepackage{geometry}
\usepackage{arcs}
\usepackage{txfonts}

\newtheorem{thm}{Theorem}[section]
\newtheorem{lem}[thm]{Lemma}
\newtheorem{cor}[thm]{Corollary}
\newtheorem{mainthm}{Main Theorem}

\theoremstyle{remark}
\newtheorem{rem}{Remark} [section]
\newtheorem{note}[rem]{Notation}

\newtheorem{prop}[thm]{Proposition}

\newtheorem*{claim}{Claim}

\fontfamily{lmss}

\theoremstyle{remark}
\newtheorem{step}{Step}





\theoremstyle{definition}
\newtheorem{defi}[thm]{Definition}
\newtheorem{Ex}[thm]{Example}

\newtheorem*{thm*}{Theorem}
\newtheorem*{cor*}{Corollary}
\newtheorem*{conj*}{Conjecture}

\DeclareFontFamily{OMX}{yhex}{}
\DeclareFontShape{OMX}{yhex}{m}{n}{<->yhcmex10}{}
\DeclareSymbolFont{yhlargesymbols}{OMX}{yhex}{m}{n}
\DeclareMathAccent{\wip}{\mathord}{yhlargesymbols}{"F3}

\relpenalty=10000
\binoppenalty=10000

\title{Strong collapsibility of the arc complexes of orientable and non-orientable crowns}
\author{Pallavi Panda}
\date{}

\begin{document}
	\maketitle
	\paragraph{Abstract. } We prove that the arc complex of a polygon with a marked point in its interior is a strongly collapsible combinatorial ball. We also show that the arc complex of a M\"{o}bius strip, with finitely many marked points on its boundary, is a simplicially collapsible combinatorial ball but is not strongly collapsible. 
	\section{Introduction}
	In this paper, we study the topology and the collapsibility of three families of finite simplicial complexes called the arc complexes. 
	
\paragraph{Collapsibility.}In 1939, Whitehead \cite{whitehead} introduced the notion of \emph{simplicial collapse} of a finite simplicial complex. It is the operation of removing the open star of a simplex that is contained in a unique maximal simplex  (see Section \ref{simtop} for the definition). The remaining complex is then simple homotopy equivalent to the original complex.  A simplicial complex is said to be \emph{collapsible} if there is a finite sequence of simplicial collapses resulting in a 0-simplex. Collapsibility is the combinatorial version of contractibility of topological spaces. It was studied extensively by Bing, Cohen, Lickorish and Zeeman. This resulted in examples of contractible complexes that were not collapsible: Zeeman's Dunce hat \cite{zeeman}, Bing's house \cite{bing}. Welker \cite{welker} proved that if a complex is collapsible then its barycentric subdivision is \emph{non-evasive}. Every non-evasive complex is collapsible but the converse is false. He also showed that the join of two complexes is collapsible if at least one of them is collapsible. 
\paragraph{Collapsible manifolds.}An important direction of research has been to find a relationship between topological balls and the condition of collapsibility of manifolds. Whitehead proved that all collapsible piecewise-linear (PL) $d$-manifolds are $d$-balls. All 2-balls are collapsible but for $d\geq 3$, there exist PL $d$-balls that are not collapsible. In \cite{extremal}, authors Adiprasito--Benedetti--Lutz construct an example of a 5-manifold that is collapsible but not homeomorphic to the 5-ball. In \cite{dmtbdry}, Benedetti shows that all \emph{endo-collapsible} manifolds with boundary are balls using the concept of duality in discrete Morse theory. Crowley \cite{crowley} proved that all 3-dimensional pseudomanifolds that are CAT(0) with some equilateral metric, are collapsible. Benedetti--Adiprasito \cite{catzero} generalised this result to higher dimensions: any complex that is CAT(0) with a metric for which all vertex stars are convex, is collapsible. 
\paragraph{Strong collapses.}A strengthening of the notion of a simplicial collapse is given by a strong collapse. 
In \cite{LC}, Matou\v sek introduced the notion of \emph{LC-removable} vertices or \emph{dominated vertices} of a simplicial complex. The link of such a vertex is a cone. The process of deleting such a vertex from the complex is called a \emph{LC-reduction} or \emph{strong collapse}. Strong collapsibility implies collapsibility but the converse is not true. Matou\v sek showed the existence and uniqueness of \emph{cores} (subcomplexes with no dominated vertices) of a finite complex. Such a core does not exist in the case of simplicial collapsibility: in \cite{benball}, Benedetti--Lutz constructed a simplicial 3-ball that is collapsible but also collapses onto a Dunce hat. In \cite{worst}, Lofano--Newman give further examples of collapsing sequences of a simplex that get stuck at a non-collapsible subcomplex. 
In \cite{strong}, Barmak--Minian study the notion of strong homotopy theory. Two complexes are said to be strongly homotopic to one another if one can be strongly collapsed onto the other. They gave a sufficient condition for strong collapsibility using the \emph{nerve} operation. They introduced the notion of strong $d$-collapse: this is the removal of a $d$-simplex whose link is a cone. This forms an intermediary between simplicial collapse and strong collapse. They also show that a simplicial complex is strong collapsible if and only if its barycentric subdivision is strongly collapsible and that the join of two simplicial complexes is strong collapsible if and only if either of them is strongly collapsible. In \cite{PHstrcoll}, Boissonnat--Pritam--Pareek give an application of strong collapses in creating an efficient algorithm to compute the persistent homology of a chain of simplicial complexes. In \cite{PHflag}, Boissonnat--Pritam show that the efficacy of their algorithm increases when restricted to flag complexes.

 \paragraph{Arc complexes.}A family of examples of flag PL-manifolds arises from the arc complexes of surfaces with boundary. For a finite-type orientable surface with finitely many marked points (called \emph{vertices}) on its boundary as well as its interior, one constructs its arc complex using non-trivial embedded arcs whose endpoints are among these vertices (see Section \ref{acc} for the formal definition). These were first studied by Harvey in \cite{Harvey}. The maximal simplices of this complex correspond to triangulations of the surface. Barring certain surfaces with low topological complexity, the arc complexes of most of the surfaces are locally non-compact. The exceptions are given by a convex polygon, a crown, a three-holed 2-sphere and the Möbius strip with marked points on the boundary, called \emph{non-orientable crown} in this paper. It is a classical result of combinatorics that the arc complex of a convex $n$-gon is a piecewise linear sphere of dimension $n-4$.   Penner \cite{penner} proved this result in the context of hyperbolic surfaces and Teichmüller theory. He studied the arc complex of an \emph{ideal polygon}, which is the convex hull in the hyperbolic plane $\HP$ of finitely many points (called \emph{ideal}) on the boundary $\HPb$. The diagonals in this case are bi-infinite hyperbolic geodesics with endpoints in this finite set. He attributes the original proof to Whitney. In \cite{penner}, Penner studied the topology of the quotient of the arc complex under the canonical action of the pure mapping class group of the surface. He conjectured that 
this quotient space is a sphere of a certain dimension, but this was later proved to be false by Sullivan. There is a complete list of surfaces (see \cite{sphereconj}) for which the statement is true.
The dual graph, with respect to the codimension zero and one faces, to the arc complex, called the \emph{flip graph}, has also been extensively studied. The flip graph of a convex polygon forms the 1-skeleton of the famous polytope called the \emph{associahedron}. Sleator–Tarjan–Thurston \cite{tarjan} showed that the $d$-dimensional associahedron has diameter at most $2d-4$ when $d \geq 9$ and that this bound is asymptotically exact using hyperbolic geometry. Later, Pournin \cite{diameterpournin} proved the equality for all $d\geq 9$ using combinatorial methods. In \cite{pointi}, Parlier--Pournin proved that the flip graph for a crown with $n\geq 1$ vertices is connected and its diameter is given by $2n-2$. This graph is a topological analogue of the flip-graph of a Euclidean convex polygon with an interior point.

In the case of a general topological surface with marked points, Harer showed that a specific open dense subset of the arc complex, called the \emph{pruned arc complex}, is an open ball of dimension one less than that of the deformation space of the surface.

Wilson \cite{wilson} proved that the arc complexes of a convex polygon, a punctured crown $\poly n\setminus \{0\}$ and a non-orientable crown are shellable. This is an ordering of all the maximal simplices of the complex so that the intersection of the $k$-th simplex with the union of the first $(k-1)$ simplices is always a pure complex of codimension one. He also reproved the sphericity of the arc complexes of the first two surfaces by using a result by Danaraj and Klee \cite{danaraj} which states that a shellable $d$-pseudo-manifold without boundary is a PL-sphere. Collapsible complexes are shellable. 

\paragraph{Contributions.} In this paper, firstly we show that the arc complex of a crown is strongly collapsible. 
\begin{mainthm}	For $n\geq 1$, the full arc complex $\ac{\holed n}$ of a crown $\holed n$ is strongly collapsible.
\end{mainthm}
As a corollary, we get that 
\begin{cor*}
For $n\geq 1$, the full arc complex $\ac{\holed n}$ of a crown $\holed n$ is a combinatorial ball of dimension $n-1$.
\end{cor*}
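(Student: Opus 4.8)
The Corollary follows by combining the Main Theorem with two classical facts. As recalled in the introduction, strong collapsibility implies ordinary simplicial collapsibility, so the Main Theorem already guarantees that $\ac{\holed n}$ is collapsible. Whitehead's theorem then says that a collapsible PL $d$-manifold is a PL (equivalently, combinatorial) $d$-ball. Hence the only thing left to supply is that $\ac{\holed n}$ is a PL-manifold with boundary of dimension $n-1$; once that is in place, the collapsibility coming from the Main Theorem upgrades it to a combinatorial ball. The dimension is the easy half: each maximal simplex corresponds to a triangulation of $\holed n$, and a direct count of the arcs in any such triangulation gives exactly $n$, so the top simplices have $n$ vertices and $\dim\ac{\holed n}=n-1$.

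To establish the manifold-with-boundary property I would induct on $n$, exploiting the recursive structure of links. The link of a simplex $\sigma$ (a system of disjoint, pairwise non-isotopic arcs) in an arc complex is canonically the join of the arc complexes of the pieces obtained by cutting the surface along $\sigma$. Cutting a crown along an arc yields either a disjoint union of convex polygons, or a convex polygon together with a crown on fewer vertices. By the classical result that the arc complex of a convex $m$-gon is a PL-sphere of dimension $m-4$, together with the inductive hypothesis that $\ac{\holed k}$ is a combinatorial $(k-1)$-ball for $k<n$, every such link is a join of spheres and balls. Since a join of spheres is a sphere and a join of a ball with anything is a ball, each link is itself a sphere or a ball; the vertices and faces whose links are spheres form the interior, and those whose links are balls form the boundary. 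This is precisely the condition that $\ac{\holed n}$ be a combinatorial manifold with (nonempty) boundary, the nonemptiness being witnessed by codimension-one faces that lie in a unique maximal simplex, i.e.\ arc systems one arc short of a triangulation admitting no flip. The base case $n=1$ is a single point, a combinatorial $0$-ball, consistent with the formula.

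The main obstacle is exactly this link computation: verifying that cutting $\holed n$ along an arbitrary arc only ever produces polygons and strictly smaller crowns, and carefully tracking which faces end up on the boundary versus the interior. This bookkeeping is where the topology of the crown genuinely enters, in contrast to the purely combinatorial dimension count. Once the recursive description of links is secured, the combinatorial-manifold structure is immediate, and the Corollary then drops out by feeding the collapsibility of the Main Theorem into Whitehead's theorem.
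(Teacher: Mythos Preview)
Your proposal is correct and follows essentially the same route as the paper: both establish by induction on $n$ that $\ac{\holed n}$ is a combinatorial manifold with boundary by computing vertex links as joins of polygon arc complexes (spheres) and smaller crown arc complexes (balls by the inductive hypothesis), and then feed the collapsibility from the Main Theorem into Whitehead's theorem. The paper additionally spells out the pseudo-manifold check (each codimension-one face lies in at most two top simplices) before the link computation, but otherwise the arguments coincide.
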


Secondly, we show that the \emph{inner arc complex} of a non-orientable crown, generated by arcs that intersect core curve of the surface, is strongly collapsible. 
\begin{mainthm}
	For $n\geq 1$, the inner arc complex $\ack C{\mob}$ of a non-orientable crown $\mob$ is strongly collapsible.
\end{mainthm}
Thirdly, we show that the full arc complex non-orientable crown is collapsible.
\begin{thm*}
For all $n\geq 1$, the simplicial complex $\ac\mob$ simplicially collapses onto $\across$.
\end{thm*}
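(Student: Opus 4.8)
The plan is to realise the collapse as an iterated deletion of the non-crossing (outer) arcs, using that $\across=\ack C{\mob}$ is exactly the full subcomplex of $\ac\mob$ spanned by the core-crossing arcs. Thus the simplices of $\ac\mob$ lying outside $\across$ are precisely those containing at least one outer arc, and $\across=\mathrm{del}_{\ac\mob}(B)$, where $B=\{b_1,\dots,b_k\}$ is the set of all outer arcs. I would delete these outer arcs one at a time, setting $K_0=\ac\mob$ and $K_j=\mathrm{del}_{K_{j-1}}(b_j)$, and the whole argument rests on the standard fact that if $\mathrm{lk}_K(v)$ is collapsible then $K\searrow\mathrm{del}_K(v)$. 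Chaining these elementary collapses gives $\ac\mob=K_0\searrow K_1\searrow\cdots\searrow K_k=\across$, which is the assertion. The proof proceeds by induction on the number $n$ of marked points, the small cases being checked directly (for instance, for $n=1$ there are no essential outer arcs, so $\ac\mob=\across$ and there is nothing to do).

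The geometric input is that cutting $\mob$ along an outer arc $b$ separates it into a polygon $D_b$ (the disk that $b$ bounds off) and a non-orientable crown $\mob_b$ with strictly fewer marked points, so that $\mathrm{lk}_{\ac\mob}(b)\cong\ac{D_b}*\ac{\mob_b}$; moreover every core-crossing arc disjoint from $b$ lies in the $\mob_b$ factor. I would order the outer arcs by the number $p_b$ of marked points contained in $D_b$, in increasing order, with ties broken arbitrarily. With this order, when $b=b_j$ is deleted, every already-removed outer arc that survives in $\mathrm{lk}(b_j)$ lies either among the diagonals of $D_b$ --- all of which have been removed, since each cuts off a sub-disk with fewer marked points --- or among the outer arcs of $\mob_b$ of smaller $p$-value. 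Since deletion distributes over the join, $\mathrm{lk}_{K_{j-1}}(b_j)\cong\{\emptyset\}*\mathrm{del}_{\ac{\mob_b}}(\{b'':p_{b''}<p_b\})$, so the disk factor collapses out entirely.

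It remains to see that the surviving factor is collapsible. It is an intermediate complex of the very same deletion process applied to the smaller crown $\mob_b$, namely the one reached after removing the outer arcs of $\mob_b$ of smallest $p$-value. By the inductive hypothesis that process is a collapse of $\ac{\mob_b}$ onto $\ack C{\mob_b}$; since $\ack C{\mob_b}$ is strongly collapsible by the second Main Theorem, it collapses to a point, and hence so does every intermediate complex between $\ac{\mob_b}$ and $\ack C{\mob_b}$. Thus the surviving factor is collapsible, whence $\mathrm{lk}_{K_{j-1}}(b_j)$ is collapsible (a join is collapsible as soon as one factor is, by Welker's theorem), and the deletion $K_{j-1}\searrow K_j$ is legitimate.

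The main obstacle is the bookkeeping in the inductive step: one must check that the chosen order on the outer arcs of $\mob$ restricts to the analogous order on each sub-crown $\mob_b$, so that $\mathrm{del}_{\ac{\mob_b}}(\{b'':p_{b''}<p_b\})$ really is an honest intermediate stage of the $\mob_b$-process rather than an unrelated deletion; and one must strengthen the induction hypothesis to record not merely that $\ac{\mob_b}\searrow\ack C{\mob_b}$ but that every intermediate complex along the way is collapsible. This is what makes the link argument go through, and it is available precisely because the endpoint $\across$ is strongly collapsible rather than merely collapsible. Verifying the geometric separation lemma and the distribution of the deletion over the join is routine once the structure of outer arcs on $\mob$ has been set up.
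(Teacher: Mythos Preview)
Your overall strategy is natural, but the key inductive claim fails. You assert that when $b=b_j$ is deleted, the set of already-removed b-arcs lying in the $\mob_b$ factor coincides with an initial segment of the $p$-ordering on $\mob_b$ itself, so that the surviving factor is an honest intermediate stage of the $\mob_b$-process. This is not so: a b-arc $b''\subset\mob_b$ whose $\mob$-disk \emph{contains} $D_b$ has $p_{b''}^{\mob}>p_b$, hence has not yet been removed, yet in $\mob_b$ its disk shrinks and may have very small $p^{\mob_b}$. Concretely, in $\mob[6]$ take $b=a_1^4$, so $D_b$ is the square on $1,2,3,4$ and $p_b=4$; the arc $a_6^4$, whose $\mob$-disk is the pentagon on $6,1,2,3,4$, has $p^{\mob}=5$ and so survives the deletion of $b$, but in $\mob_b\cong\mob[4]$ it bounds a triangle, i.e.\ $p^{\mob_b}=3$. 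Thus after deleting $b$ the $\mob_b$-factor of the link still contains $a_6^4$ while missing the two type-1 triangle arcs of $\mob_b$; this is not any stage of the increasing-$p$ process on $\mob_b$, and your induction hypothesis does not cover it. The difficulty you flag as ``routine bookkeeping'' is exactly where the argument breaks, and no simple reordering of the b-arcs fixes it.

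The paper avoids this mismatch by collapsing at a coarser granularity: rather than deleting b-arcs one at a time, it face-deletes \emph{saplings}---boundary simplices $\sigma_s$ whose branch arcs cut out a trunk $\mob[d]$---in order of increasing degree $d$. Once all saplings of degree at most $d$ are gone, the link of a degree-$(d{+}1)$ sapling has \emph{no} b-arcs whatsoever left in its trunk factor (any such arc would produce a sapling of strictly smaller degree), so that factor is exactly the inner complex $\ack C{\mob[d+1]}$, collapsible by the earlier theorem. Welker's join and link lemmas then give the collapse $Y^d\searrow Y^{d+1}$. In short, by deleting the right \emph{simplices} rather than vertices, the paper arranges that the non-orientable factor of each link is always a full inner arc complex, with no residual b-arcs to track across the induction.
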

Again, as a corollary we get that 
\begin{cor*}
For all $n\geq 1$, the simplicial complex $\ac\mob$ is a combinatorial ball of dimension $n-1$.
\end{cor*}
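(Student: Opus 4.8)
The plan is to produce an explicit sequence of elementary (simplicial) collapses of $\ac\mob$ that deletes exactly those faces which are not faces of $\across$, so that the subcomplex $\across$ remains at the end. I would phrase this through discrete Morse theory: it suffices to build an \emph{acyclic matching} on the face poset of $\ac\mob$ whose unmatched (critical) faces are precisely the faces of $\across$, since Forman's theorem then yields the collapse $\ac\mob \searrow \across$. To set this up, I would fix the combinatorial model of arcs: label the $n$ marked points cyclically on $\partial\mob$ and record each arc by its pair of endpoints together with the parity of its intersection with the core curve $C$. This splits the vertices of $\ac\mob$ into the \emph{core-crossing} arcs, which span $\across$ (the inner arc complex $\ack C{\mob}$ of the previous Main Theorem), and the \emph{core-avoiding} arcs, whose presence in a face is exactly what the collapse must remove.

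The matching I would use is organised around a fixed linear order on the core-avoiding arcs, chosen compatibly with the cyclic order of the marked points. To each face $\sigma$ that contains at least one core-avoiding arc I assign a canonical such arc $e(\sigma)$ — for instance the smallest one in the order that can be toggled — and I pair $\sigma$ with $\sigma\cup\{e(\sigma)\}$ or $\sigma\setminus\{e(\sigma)\}$ according to whether $e(\sigma)\in\sigma$. A face all of whose arcs are core-crossing is left critical, and these are precisely the faces of $\across$. Two points must be verified: that the assignment $e(\cdot)$ is consistent, so the pairs genuinely define a matching, and that the matching is acyclic. Consistency reduces to the statement that toggling the pivot arc does not alter the canonical choice on the remaining arcs, which follows from the disjointness (compatibility) relations among arcs once the order is fixed as above.

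The step I expect to be the main obstacle is acyclicity, equivalently the exclusion of closed alternating paths, and this is where non-orientability enters: whether a given core-avoiding arc is toggleable can depend on the core-crossing arcs already present in the face, so the bookkeeping of which arcs remain pairwise disjoint after a toggle is delicate. I would control this either directly, by exhibiting a weight that strictly decreases along every alternating path, or by performing the deletions in stages as \emph{strong $d$-collapses} in the sense of Barmak--Mini\'an — removing faces whose links are cones, which are ordinary collapses but of positive dimension — combined with an induction on $n$ (with base case $n=1$) in which the faces using a distinguished outer arc are removed, reducing $\mob$ to a non-orientable crown with fewer marked points. This staging also explains why the argument genuinely needs simplicial rather than strong collapses: vertex domination alone would force strong collapsibility, which fails for $\ac\mob$, so the removals must be carried out at the level of higher-dimensional faces.

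Finally, once all core-avoiding faces have been matched the remaining complex is recognised as $\across$, giving $\ac\mob \searrow \across$. Chaining this with the strong (hence simplicial) collapsibility of $\across$ established in the preceding Main Theorem shows that $\ac\mob$ is collapsible. The corollary then follows by recognising $\ac\mob$ as a collapsible combinatorial manifold with boundary of dimension $n-1$ and invoking Whitehead's theorem that a collapsible PL-manifold is a PL-ball.
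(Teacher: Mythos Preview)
Your proposal conflates two tasks. Most of it sketches an approach to the preceding Theorem ($\ac\mob \searrow \across$), whereas the Corollary additionally needs the fact that $\ac\mob$ is a combinatorial manifold with boundary before Whitehead's theorem can be invoked. You assert this in the last line (``recognising $\ac\mob$ as a collapsible combinatorial manifold'') but give no argument; in the paper this is the actual content of the Corollary, carried out by induction on $n$: the link of a $0$-simplex is a join of a polygon arc complex (a sphere) with a smaller $\ac{\mob[m]}$ (a ball by induction), exactly as in Corollary~\ref{acrownball}.

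For the collapse $\ac\mob\searrow\across$ itself your discrete Morse scheme is not yet a proof. The pivot rule ``smallest core-avoiding arc that can be toggled'' need not define a matching: if $e=e(\sigma)\in\sigma$ and some smaller core-avoiding arc $e'$ intersects $e$ but is disjoint from every arc of $\sigma\setminus\{e\}$, then $e(\sigma\setminus\{e\})\le e'<e$ and the pair breaks. Your claim that consistency ``follows from the disjointness relations once the order is fixed'' is precisely the point at issue and does not hold for a generic linear order on b-arcs. You then flag acyclicity as the main obstacle but leave it open. The paper organises the collapse quite differently: it filters boundary simplices by the \emph{degree} of their \emph{sapling}, shows that in the partially collapsed complex $Y^d$ the link of any sapling of degree $d+1$ is a join of polygon arc complexes with $\across[d]$ (collapsible by the earlier Main Theorem), and then applies Welker's criterion (Proposition~\ref{Welker}\ref{link}) to face-delete it. Your second proposed route --- staged strong $d$-collapses with induction on $n$ --- is closer in spirit to this, but as written it is only a gesture and does not identify which higher-dimensional faces to remove or why their links are cones.
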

Finally, we prove that the full arc complex is not strongly collapsible.
\begin{mainthm}
	For $n\geq 4$, the full arc complex $\ac{\mob}$ of the surface $\mob$ is not strongly collapsible. 
\end{mainthm}
In \cite{ppballs}, we endowed convex polygons and once-punctured crowns with a bicolouring (red-blue) of the vertices and proved that the arc complex generated by the blue-blue and red-blue arcs is a shellable closed ball. In this paper we will show that in the case of a particular bicolouring of an $n$-gon (called \emph{integral strip}, see Section \ref{vocab}), the arc complex generated by red-blue arcs only is strongly collapsible.
\begin{mainthm}
	For $m,n\geq 1$ and $m+n\geq 5$, the arc complex $\acs{m,n}$ of an integral strip $P(m,n)$ is strongly collapsible.
\end{mainthm}

	\paragraph{Plan of the paper} In Section \ref{simtop}, we recall the necessary concepts on simplicial topology, collapses and strong collapses along with some results that will be used later in the proofs. In Section \ref{vocab}, we introduce the two surfaces, their arcs and their arc complexes. In Section \ref{proofs}, we give the proofs of the four theorems stated above. 
	\paragraph{Acknowledgments} This work was done in and was funded by Universit\'e Sorbonne Paris Nord. I would like to thank Lionel Pournin for his helpful comments and encouragement.

	\section{Simplicial Topology}\label{simtop}
		In this section, we will recall all the vocabulary and important results related to simplicial collapses that will be used in the proofs of this paper.
		
	\paragraph{Combinatorial manifolds. }A simplicial complex is called \emph{pure} if all of its maximal simplices have the same dimension. The \emph{dual graph} of a pure simplicial complex is the graph whose vertices are the maximal simplices and two vertices are joined by an edge if the corresponding maximal simplices share a codimension one face. A pure simplicial complex is said to be \emph{strongly connected} if its dual graph is connected. A \emph{$d$-pseudo-manifold with boundary} is a pure strongly connected $d$-simplicial complex in which every $(d-1)$-simplex is contained in atmost two $d$-simplices.
	A simplicial complex $X$ is called a \emph{combinatorial $d$-manifold with boundary} if the link of each 0-simplex is either a combinatorial $(d-1)$-sphere or a combinatorial $d-1$-ball and there exists a 0-simplex such that its link is of the latter kind. 


	\paragraph{Simplicial collapses.}Let $X$ be a simplicial complex and $\sigma$ a simplex of $X$. The \emph{face-deletion} of $\sigma$ in $X$ is the subcomplex defined as $\fdel \sigma X:=\{\eta\in X \mid \sigma\nsubseteq \eta\}$. In particular, the face deletion of a 0-simplex $v$ in $X$ is denoted by $X\smallsetminus \{v\}.$		
	Let $\sigma\subsetneq\tau \in X$ be two simplices  such that $\tau$ is the only maximal simplex containing $\sigma$.  Then $\sigma$ is called a \emph{free face} of $X$. The tuple $(\sigma, \tau )$ is called a \emph{collapsible pair}. The complex $X$ is said to \emph{simplicially collapse} onto its  subcomplex $\fdel \sigma X$, if $\sigma$ is a free simplex of $X$. The complex $X$ is said to be \emph{collapsible} if there is a finite sequence of simplicial collapses leading to a 0-simplex. By $X\searrow Y$, we mean that the complex $X$ simplicially collapses onto the complex $Y$. Whitehead \ref{whitehead} showed that if $X, Y$ are two simplicial complexes such that $X\searrow Y$, then $X$ has the same homotopy type as $Y$.
%
%
The following theorems about collapsible complexes were proved by Welker in \cite{welker}.
\begin{prop}\label{Welker}
	\begin{enumerate}[label=\alph*)]
	\item \label{join} Let $X$ and $Y$ be two simplicial complexes such that $X$ is collapsible. Then the join $X\Join Y$ is collapsible.
	\item \label{link} Let $X$ be a simplicial complex and $\sigma\in X$ be a simplex such that $\Link \sigma X$ is collapsible. Then,  $X\searrow\fdel \sigma X$.
	\end{enumerate}
\end{prop}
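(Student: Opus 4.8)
The plan is to prove part \ref{link} first and then deduce part \ref{join} from it, since the join statement follows cleanly once we can collapse a complex onto the face-deletion of a simplex with collapsible link. For part \ref{link}, the key idea is that collapses of $\Link \sigma X$ lift to collapses of the open star of $\sigma$ in $X$. Concretely, I would fix a collapsing sequence $\Link \sigma X = L_0 \searrow L_1 \searrow \cdots \searrow L_m = \{v_0\}$ ending at a single vertex $v_0$, where the $i$-th step removes a free face $\alpha_i$ with unique maximal coface $\beta_i$ in $L_{i-1}$, and lift this to a sequence of collapses in $X$ that successively removes the simplices containing $\sigma$.

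For the bookkeeping, I would set $X_i := \fdel \sigma X \cup \{\sigma \cup \eta : \eta \in L_i \cup \{\emptyset\}\}$ and check that each $X_i$ is a subcomplex of $X$: downward closure holds because $L_i$ is a subcomplex and any face of $\sigma\cup\eta$ either contains $\sigma$, hence equals $\sigma\cup\eta'$ with $\eta'\subseteq\eta$, or omits a vertex of $\sigma$, hence lies in $\fdel\sigma X$. The main verification is that $(\sigma\cup\alpha_i,\ \sigma\cup\beta_i)$ is a collapsible pair of $X_{i-1}$: every coface of $\sigma\cup\alpha_i$ must contain $\sigma$, so it has the form $\sigma\cup\eta$ with $\eta\supseteq\alpha_i$ and $\eta\in L_{i-1}$, and these are in inclusion-preserving bijection with the cofaces of $\alpha_i$ in $L_{i-1}$; since $\beta_i$ is the unique maximal coface of $\alpha_i$, the simplex $\sigma\cup\beta_i$ is the unique maximal coface of $\sigma\cup\alpha_i$. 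Hence $X_{i-1}\searrow X_i$, and after $m$ steps we reach $X_m = \fdel\sigma X \cup \{\sigma,\ \sigma\cup\{v_0\}\}$. A final collapse of the pair $(\sigma,\ \sigma\cup\{v_0\})$ removes the last two simplices containing $\sigma$ and yields $X\searrow\fdel\sigma X$.

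To deduce part \ref{join}, I would show that a single collapse $A\searrow\fdel\sigma A$ of $X$ (removing a free face $\sigma$) induces $A\Join Y \searrow (\fdel\sigma A)\Join Y$, and then iterate along a collapsing sequence $X = X_0\searrow\cdots\searrow X_k=\{\mathrm{pt}\}$. For the single step I would use the two join identities $\Link \sigma {A\Join Y} = (\Link \sigma A)\Join Y$ and $\fdel \sigma {A\Join Y} = (\fdel\sigma A)\Join Y$, both of which follow directly from the definitions because $\sigma$ lies in the vertex set of the $A$-factor. Since $\sigma$ is free in $A$ with unique maximal coface $\tau$, its link $\Link\sigma A$ is the full simplex on the vertices of $\tau\setminus\sigma$; consequently $\Link \sigma {A\Join Y} = (\Link\sigma A)\Join Y$ is a cone (any vertex of $\tau\setminus\sigma$ serves as apex) and is therefore collapsible. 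Part \ref{link} then gives $A\Join Y\searrow(\fdel\sigma A)\Join Y$. Iterating yields $X\Join Y\searrow\{\mathrm{pt}\}\Join Y$, and this last complex is a cone on $Y$, hence collapsible; composing the collapses shows $X\Join Y$ is collapsible.

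The main obstacle I anticipate is the verification in part \ref{link} that the lifted pairs remain collapsible pairs throughout the partially collapsed complexes $X_{i-1}$ — that is, tracking the cofaces of $\sigma\cup\alpha_i$ inside $X_{i-1}$ rather than inside all of $X$, and confirming that no simplex of $\fdel\sigma X$ is ever disturbed. Once the interval-by-interval correspondence between the star of $\sigma$ and $\Link\sigma X$ is set up correctly, the remainder is routine, and the deduction of part \ref{join} reduces to the two elementary join identities together with the standard fact that a cone is collapsible.
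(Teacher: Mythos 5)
Your proposal is correct and complete. Note that the paper itself offers no proof of this proposition — it is quoted from Welker — so the comparison here is with the standard argument, which is essentially what you give: the bijection $\eta\mapsto\sigma\cup\eta$ between $\Link \sigma X \cup\{\emptyset\}$ and the cofaces of $\sigma$ is inclusion-preserving, so a free pair $(\alpha_i,\beta_i)$ of $L_{i-1}$ lifts to a free pair $(\sigma\cup\alpha_i,\sigma\cup\beta_i)$ of $X_{i-1}$, and your verification that maximality among cofaces of $\sigma\cup\alpha_i$ forces maximality in all of $X_{i-1}$ (any proper coface of $\sigma\cup\eta$ again contains $\sigma$) is exactly the point that needs checking; the terminal collapse of the pair $(\sigma,\sigma\cup\{v_0\})$ then lands on $\fdel \sigma X$ as claimed. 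Your deduction of part a) from part b) is also sound: for a free face $\sigma$ with unique maximal coface $\tau$, every coface of $\sigma$ is a face of $\tau$, so $\Link \sigma A$ is the full simplex on $\tau\setminus\sigma$, hence $\Link \sigma {A\Join Y}=(\Link \sigma A)\Join Y$ is a cone, and part b) turns each elementary collapse of $X$ into a collapse $A\Join Y\searrow(\fdel \sigma A)\Join Y$, terminating at the cone $\{\mathrm{pt}\}\Join Y$. The only ingredients you invoke without proof are the two join identities (immediate from the definitions, as you say) and the fact that cones are collapsible; the latter is elementary (collapse the pairs $(\gamma,\,\gamma\cup\{v\})$ in decreasing order of dimension) and is not circular, since it does not rely on either part of the proposition. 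One could alternatively prove a) directly by lifting the matching $\gamma\leftrightarrow\gamma\cup\{v\}$ through the join, but routing it through b) as you do is cleaner and is the order in which the paper actually uses the two statements.
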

As mentioned  in the introduction, the following theorem was proved by Whitehead \cite{whitehead}.
\begin{thm}\label{whitehead}
A collapsible combinatorial $d$-manifold is a combinatorial $d$-ball.
\end{thm}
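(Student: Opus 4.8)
The plan is to realise the whole manifold as a \emph{regular neighbourhood} of a single point and then to invoke the classical fact that a regular neighbourhood of a point in a PL manifold is a ball. Since the manifold $M$ is collapsible, there is a sequence of elementary collapses $M = M_0 \searrow M_1 \searrow \cdots \searrow M_k = \{p\}$ ending at a single vertex $p$. I would use this collapse not merely to deduce contractibility (which is what Whitehead's homotopy-invariance of collapses already gives), but as the defining datum of a regular neighbourhood structure on $M$ itself.

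Recall that for a subpolyhedron $X$ of a compact PL $d$-manifold $N$, one calls $N$ a regular neighbourhood of $X$ if $N$ is a closed neighbourhood of $X$ that is itself a $d$-manifold and collapses to $X$. The Regular Neighbourhood Theorem asserts that regular neighbourhoods exist, are unique up to a PL homeomorphism fixing $X$, and that the regular neighbourhood of a point is a $d$-ball. The first step is therefore to observe that $M$ is \emph{literally} a regular neighbourhood of $\{p\}$ in itself: it is a $d$-manifold, it is trivially a closed neighbourhood of $p$, and by the displayed sequence it collapses to $p$. The combinatorial-manifold hypothesis is exactly what guarantees the manifold condition here, via the requirement that every vertex link be a $(d-1)$-sphere or $(d-1)$-ball.

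The second step is to compare $M$ with a small regular neighbourhood $B$ of $p$, e.g. the closed star of $p$ in a suitable subdivision. If $p$ lies in the interior of $M$, then $B$ is a $d$-ball; if $p$ lies on $\partial M$ — which the given definition allows, since it posits a vertex whose link is a ball — then $B$ is a half-ball, itself PL homeomorphic to $\Delta^d$. In either case the uniqueness clause of the Regular Neighbourhood Theorem yields a PL homeomorphism $M \cong B \cong \Delta^d$, so $M$ is a combinatorial $d$-ball, as required.

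The main obstacle is the Regular Neighbourhood Theorem itself, which is where all the genuine work resides. Its proof runs by induction on the length $k$ of the collapse: one shows that a single elementary expansion (the inverse of collapsing a free face $\sigma \subsetneq \tau$) alters a regular neighbourhood only by attaching a collar-like $d$-ball along a face of its boundary, so that the PL homeomorphism type, rel $X$, is preserved at every step. Making this precise requires the sphere-or-ball structure of the links at each stage and, typically, passage to a common subdivision to align the two neighbourhoods; reconciling the link-based definition of a combinatorial manifold used in the excerpt with the PL-manifold language of regular neighbourhood theory is the principal technical point to verify. Once that engine is granted, the reduction above is immediate, and it is worth noting that this is precisely the route by which Whitehead originally established the statement.
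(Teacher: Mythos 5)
The paper does not prove this statement at all: it is quoted verbatim as a classical theorem of Whitehead \cite{whitehead} and used as a black box in Corollaries \ref{acrownball} and \ref{acmobball}, so there is no internal argument to measure your proposal against. What you have written is, in substance, the standard proof from the PL literature --- Whitehead's original regular-neighbourhood argument, in its modern textbook form (Rourke--Sanderson, \emph{Introduction to Piecewise-Linear Topology}, Ch.~3; Hudson; Zeeman's seminar notes) --- and as a sketch it is correct: the collapse $M \searrow \{p\}$ exhibits $M$ as a regular neighbourhood of $p$ in itself, a derived neighbourhood of $p$ (the closed star in a second derived subdivision) is a $d$-ball whether $p$ is an interior vertex (cone on a combinatorial $(d-1)$-sphere) or a boundary vertex (cone on a combinatorial $(d-1)$-ball), and uniqueness of regular neighbourhoods rel $p$ yields the PL homeomorphism $M \cong \Delta^d$. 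You also correctly locate where the real work lives (the Regular Neighbourhood Theorem, proved by induction on elementary expansions) and correctly flag the need to reconcile the paper's link-based definition of combinatorial manifold with PL-manifold language; for the record, the vertex-link condition propagates to links of all simplices by induction, since links of vertices in a combinatorial ball or sphere are again balls or spheres, so the two notions agree.

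One step deserves more care than your write-up gives it: the assertion that $M$ is ``literally'' a regular neighbourhood of $\{p\}$ in itself is true only modulo a recognition theorem, not by definition. In the standard development, regular neighbourhoods are \emph{defined} via derived neighbourhoods, and the characterization you quote (closed manifold neighbourhood that collapses to $X$) is the content of the Simplicial Neighbourhood Theorem, whose hypotheses include conditions on the frontier of $N$ in $M$ and on how $N$ meets $\partial M$. For $N = M$ these hypotheses hold vacuously because the frontier of $M$ in $M$ is empty --- but that is precisely the point to check, and with a definition as permissive as the one you state, the uniqueness clause would itself need justification. Since $p$ may a priori be a boundary vertex, you should also verify (or arrange, e.g.\ by an external collar $M \cup (\partial M \times I) \cong M$) that the uniqueness theorem you invoke applies in the bounded case. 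None of this is a gap in the mathematics --- it is exactly how the classical proof runs --- but a complete write-up must cite or prove the recognition and uniqueness statements in the precise forms used, rather than folding them into the definition.
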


\paragraph{Strong collapses.}A 0-simplex $v\in X$ is said to be \emph{vertex-dominated} if there exists another 0-simplex $v'\in X$ such that $\Link{v}{X}=v' \Join L,$ where $L$ is a subcomplex of $X$. In other words, any maximal simplex containing $v$ must also contain $v'$. A complex $X$ is said to \emph{strongly collapse} on $ X\smallsetminus \{v\}$ if the 0-simplex $v$ is vertex-dominated in $X$. We will denote this operation as $X\Searrow  X\smallsetminus \{v\}$.

\begin{Ex}
For $n\geq 0$, any $n$-simplex is strongly collapsible.
\end{Ex}

%
Barmak-Minian \cite{strong} proved the following theorems about strong collapsibility which will be used later.
\begin{thm}\label{barmak}
	\begin{enumerate}[label=\alph*)]
	\item \label{simultcoll}
		Let $L$ be a subcomplex of a complex $K$ such that every vertex of $K$ which is not in $L$ is dominated by some vertex in $L$. Then $K\Searrow L$.
	\item \label{strong2simple}
	If $X,Y$ are two simplicial complexes such that $X\Searrow Y$, then $X\searrow Y$.
	\item \label{scjoin}
	Given two simplicial complexes $X,Y$, their join $X\Join Y$ is strongly collapsible if and only if either $X$ or $Y$ is strongly collapsible.
	\end{enumerate}
\end{thm}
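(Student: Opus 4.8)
The three parts are best treated in order, since part~\ref{simultcoll} supplies a tool used in part~\ref{scjoin}.

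For part~\ref{simultcoll}, the plan is to remove the vertices of $K$ outside $L$ one at a time, the point being that domination \emph{by a vertex of $L$} is never destroyed by such removals. First I would record a stability lemma: if $v$ is dominated by $w$ in a complex $K$ and $u\notin\{v,w\}$ is any vertex, then $v$ is still dominated by $w$ in $K\smallsetminus\{u\}$. This is immediate from $\Link{v}{K\smallsetminus\{u\}}=\{\sigma\in\Link{v}{K}\mid u\notin\sigma\}$: writing $\Link{v}{K}=w\Join S$ and using $u\neq w$, the right-hand side equals $w\Join(S\smallsetminus\{u\})$, so $w$ still dominates $v$. Now enumerate the vertices of $K$ not in $L$ as $v_1,\dots,v_m$, each dominated by some $w_i\in L$. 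Removing them in order is legal: at the $i$-th step each $w_i\in L$ differs from all previously removed $v_j\notin L$, so the lemma guarantees that $v_i$ is still dominated by $w_i$, making the removal a strong collapse. After $m$ steps we reach exactly $L$, giving $K\Searrow L$.

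For part~\ref{strong2simple}, it suffices to realise a single strong collapse as a simplicial collapse and concatenate. If $X\Searrow X\smallsetminus\{v\}$ with $v$ dominated by $w$, then $\Link{v}{X}=w\Join S$ is a cone; since a single vertex is (trivially) collapsible, Proposition~\ref{Welker}\ref{join} shows $\{w\}\Join S$ is collapsible. Hence Proposition~\ref{Welker}\ref{link} gives $X\searrow\fdel{v}{X}=X\smallsetminus\{v\}$, and chaining this over a strong-collapse sequence $X=X_0\Searrow\cdots\Searrow X_k=Y$ yields $X\searrow Y$.

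Part~\ref{scjoin} is where the real work lies, and I would split it. For the easy direction I first record that domination lifts across joins: if $v$ is dominated by $w$ in $X$, then $\Link{v}{X\Join Y}=\Link{v}{X}\Join Y=w\Join(S\Join Y)$, so $v$ is dominated by $w$ in $X\Join Y$. Assuming $X$ strongly collapsible, I can therefore push its collapsing sequence into the join step by step, obtaining $X\Join Y\Searrow\{\ast\}\Join Y$; and the cone $\{\ast\}\Join Y$ is strongly collapsible by part~\ref{simultcoll} applied with $L=\{\ast\}$, since every vertex of $Y$ is dominated by the apex. Hence $X\Join Y$ is strongly collapsible.

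The converse I expect to be the main obstacle, and I would argue it by contrapositive using the uniqueness of the core (Matou\v sek \cite{LC}). The crux is the lemma: if $A$ and $B$ are cores each with at least two vertices, then $A\Join B$ has no dominated vertex. Indeed, suppose $v\in A$ (the case $v\in B$ is symmetric) is dominated by $w$, so that $w$ lies in every facet of $\Link{v}{A}\Join B$. Since the facets of this join are precisely the joins of a facet of $\Link{v}{A}$ with a facet of $B$, the position of $w$ splits into two cases. If $w\in A$, then $w$ lies in every facet of $\Link{v}{A}$, making $\Link{v}{A}$ a cone with apex $w$ and so $v$ dominated in $A$, contradicting that $A$ is a core; if $w\in B$, then $w$ lies in every facet of $B$, making $B$ a cone, whose non-apex vertices are dominated in $B$, contradicting that $B$ is a core with at least two vertices. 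Granting the lemma, suppose neither $X$ nor $Y$ is strongly collapsible, so the cores $X_c,Y_c$ each have at least two vertices. Lifting dominations as above gives $X\Join Y\Searrow X_c\Join Y_c$, and by the lemma $X_c\Join Y_c$ is itself a core, with at least four vertices. By uniqueness of the core, $X_c\Join Y_c$ is the core of $X\Join Y$; since it is not a single vertex, $X\Join Y$ is not strongly collapsible, completing the contrapositive.
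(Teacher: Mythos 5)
Your proof is essentially correct, but note that the paper itself contains no proof of this statement: it is quoted verbatim from Barmak--Minian \cite{strong} (with the core machinery going back to Matou\v sek \cite{LC}), so the comparison is with the original literature rather than with anything in this paper. Your reconstruction matches the standard arguments in structure. For part b), deriving the collapse from the two Welker facts quoted as Proposition \ref{Welker} (the cone $w\Join S$ is collapsible since a point is, hence deleting a dominated vertex is a simplicial collapse onto the face-deletion) is exactly the right mechanism. For part c), your key lemma --- a join of two minimal complexes, each with at least two vertices, admits no dominated vertex, because a dominating vertex $w$ must lie in every facet of one factor, forcing either a domination inside that factor or that factor to be a cone --- is the same mechanism as Barmak--Minian's, who phrase the hard direction as an induction on dominated vertices rather than routing it through uniqueness of the core; your core-based contrapositive is an equally valid and arguably cleaner packaging. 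Your stability lemma for part a) (domination of $v$ by $w$ survives deletion of any vertex $u\notin\{v,w\}$) is correct and is precisely what legitimises removing the outside vertices in an arbitrary order.

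One caveat, which is a defect of the printed statement rather than of your argument: part a) requires $L$ to be a \emph{full} (induced) subcomplex of $K$. Your deletion sequence terminates at $K\smallsetminus\{v_1,\ldots,v_m\}$, which is the subcomplex induced on the vertex set of $L$; if $L$ is not full, this is strictly larger than $L$ and the statement is false. For instance, let $K$ be the 2-simplex $abc$ together with a pendant edge $va$, and let $L$ be the boundary circle of $abc$: the only vertex of $K$ outside $L$ is $v$, which is dominated by $a\in L$, yet $K$ cannot strongly collapse onto $L$, since strong collapses preserve homotopy type and $K$ is contractible while $L$ is not. Your closing sentence ``after $m$ steps we reach exactly $L$'' silently uses fullness and should be flagged. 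In every application in this paper (the complexes $Y^k$, $X_1$, $X_I$, $X_{I,J}$ are all obtained from arc complexes by vertex deletions, hence full subcomplexes), the hypothesis holds, so nothing downstream is affected.
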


\section{Vocabulary}\label{vocab}
\subsection{The surfaces}
For each of the following topological 2-manifolds $S$, we choose a boundary component and mark $n\geq 1$ distinct points on it. These points are called \emph{vertices} and the set of all vertices is denoted by $P$. The portion of the boundary contained between two consecutive vertices is called an \emph{edge}. The set of all boundary edges is denoted by $\mathcal{E}$. Now we will define each surface individually.
\paragraph{Convex polygons.} For $n\geq 1$, we denote by $\poly n$, a closed disk with $n\geq 1$ marked points on its boundary. For $n\geq 3$, this surface, when endowed with a convex Euclidean metric, becomes the usual convex polygon. Also for $n\geq 3$, the surface $\poly n \setminus \mathcal{P}$, obtained by removing the marked points, admits a convex hyperbolic metric. In this metric, the vertices are on the ideal boundary of the hyperbolic plane and the edges are hyperbolic geodesics joining any two consecutive pair of ideal points. This surface is called an ideal polygon. 
\begin{figure}[th!]
	\centering
	\includegraphics[width=12cm]{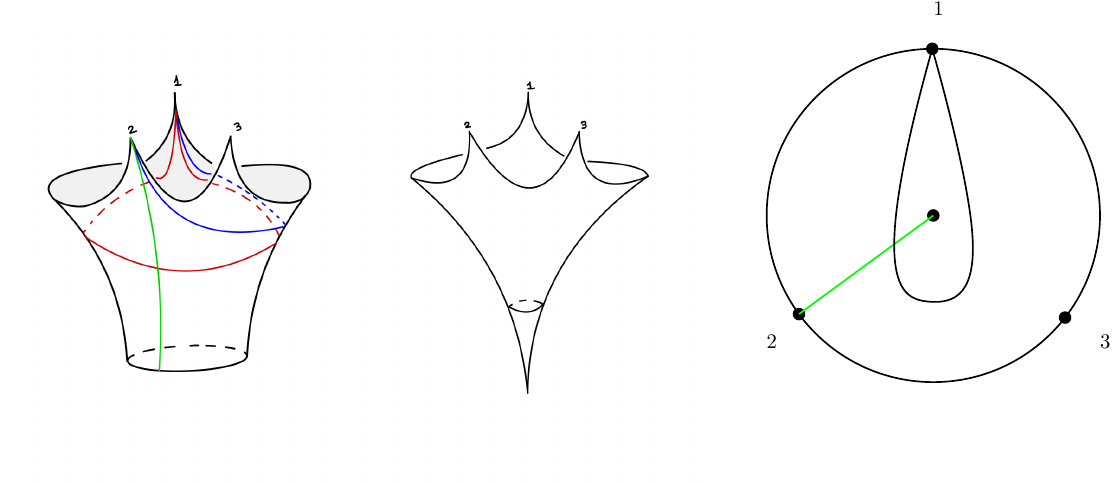}
	\caption{A crown with three vertices.}
	\label{surf}
\end{figure}
\paragraph{Orientable Crowns.} 
For $n\geq 1$, we denote by $\holed n$, the closed disk $\poly n$ with one marked point in its interior labeled as $0$. The topological surface $S=\holed n \setminus P \cup \{0\}$ is homeomorphic to an annulus. Let $\gamma$ be the generator of its fundamental group. This surface $S$ admits two types of convex hyperbolic metrics, depending on whether $[\gamma]$ is mapped to a parabolic or a hyperbolic element by the holonomy representation of the metric. See Fig. \ref{surf}. In either case, the marked points in the boundary are represented by ideal points in $\HPb$, like in the case of ideal polygons. We will refer to this topological surface $\holed n$ as a \emph{crown}.

\paragraph{Non-orientable crowns.}For $n\geq 1$, we denote by $\mob$ the M\"{o} strip with $n$ vertices on its boundary. We will refer to this surface as the \emph{non-orientable crown}. Once again, the surface obtained by removing the vertices admits a convex hyperbolic metric. See Fig. \ref{surf2}. The top left figure is the surface $\mob\setminus P$ endowed with a hyperbolic metric. The figure on the top right depicts the topological surface $\mob [3]$ — the the crossed circle at the centre represents a copy of the projective plane. The bottom figure represents the universal cover of this surface.
\begin{figure}
	\centering
	\includegraphics[width=15cm]{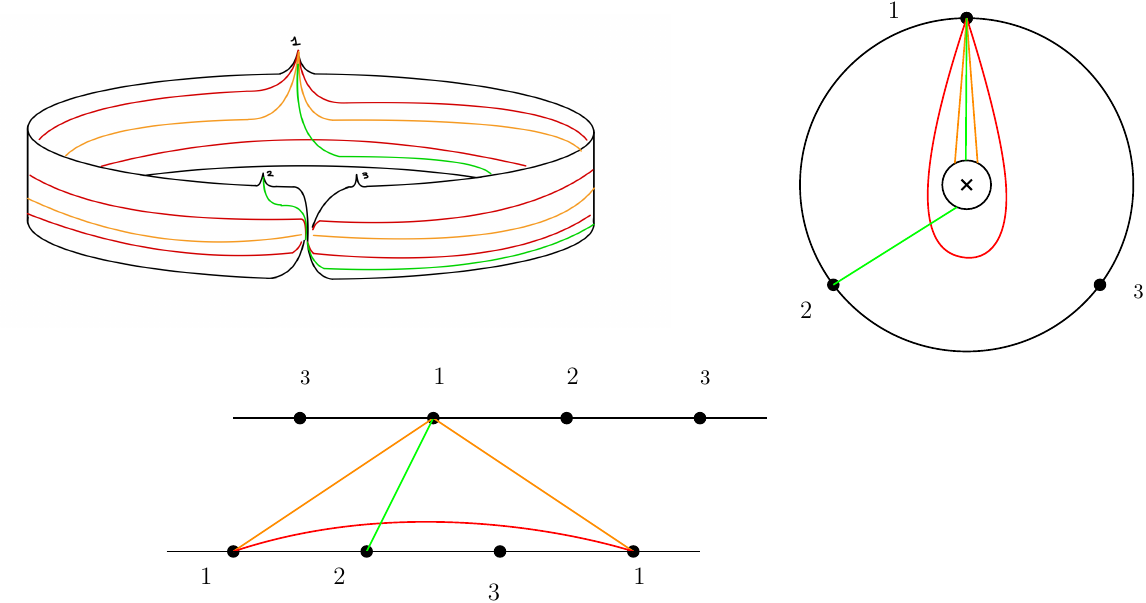}
	\caption{A non-orientable crown with three vertices.}
	\label{surf2}
\end{figure}

\paragraph{Integral strip.} For $m,n\geq 1$, let $P(m,n)$ be the quadrilateral whose corners are given by $(1,0)$, $(1,1),$ $(n,1)$ and $(m,0)$. Next we place $m-2$ vertices at the points $(2,0), \ldots, (m-1,0)$ on the $x$-axis and $n-2$ vertices $(2,1), \ldots, (n,1)$ on the $y=1$ line. These two sets are called the $x$ and $y$ vertices, respectively. In Fig. \ref{figstrip}, we have coloured the $x$-vertices in blue and the $y$-vertices in red. Topologically, this is a polygon $\poly{m+n}$ with a bicolouring. 

\subsection{Arcs and arc complexes}\label{acc}
Next we consider simple arcs on the surfaces defined in the previous section.
\paragraph{Simple arcs. } A \emph{simple arc} is an embedding $a: [0,1]\rightarrow S$ such that $a [0,1]\cap \partial S= \{a(0), a(1)\}$ where $a(0), a(1)$ are two vertices. A simple arc is said to be \emph{trivial} if it is homotopic, relative to its endpoints, to a vertex or an edge of the given surface. Since in this paper, we will consider only simple non-trivial arcs, therefore we will often omit the adjectives "non-trivial" and "simple" before the noun "arc". The homotopy class of an arc $a$ is denoted by $[a]$. Two homotopy classes of arcs are said to be disjoint if there exist two arcs in the respective equivalence classes which are disjoint. When endowed with a convex metric, Euclidean or hyperbolic, the minimal intersection between two homotopy classes is realised by geodesics. For example, in a convex polygon, these arcs are the diagonals.\\

Next we introduce the vocabulary for some specific types of arcs. 

\begin{defi}\label{maximal}
An arc of $S=\holed n, \mob[n]$ is called \emph{maximal} if both its endpoints coincide. 
\end{defi}

\begin{defi}\label{boundary}
In an orientable crown $\holed n$, any arc that does not have an endpoint on the vertex $0$, is called a \emph{boundary arc} or a b-arc in short. In a non-orientable crown $\mob$, any arc that decomposes the strip into one orientable and one non-orientable subsurface is called a boundary arc or b-arc. In either case, a non-boundary arc is called \emph{core arc} or simply \emph{c-arc}.
\end{defi}
\begin{rem}
In \cite{wilson}, Wilson introduced this terminology of arcs in the case of a non-orientable crown. There, a b-arc is short for \emph{bounding arc} and c-arc is short for \emph{cross-cap arc}. Since in this paper we are defining these two arcs for the orientable crown as well, we renamed the full-forms while keeping the shorthand. 
\end{rem}
\begin{rem}
A c-arc of a non-orientable crown $\mob[n]$ always intersects its one-sided core curve. Similarly, any c-arc of a one-holed polygon $\holed n$ intersects the two-sided core curve of $\holed n \smallsetminus \{0\}$.
\end{rem}

\begin{note}
\begin{enumerate}[label=\alph*)]
	\item Any c-arc of the surface $\holed n$ joins the internal point $0$ with some vertex $i\in \intbra$ in the boundary. So these arcs are going to be denoted by $c_i$. These are coloured green in the top panel of Fig. \ref{accrown}.
	\item For $i\in \intbra$ and $S=\holed n, \mob$,  we denote by $M_i$ the maximal b-arc with both its endpoints at the vertex labeled $i$. These are coloured in red in Figs. \ref{accrown}, \ref{acmob}.
	\item For $i\in \intbra$ and $S= \mob$, we denote by $L_i$ the maximal c-arc with both its endpoints at the vertex labeled $i$. These are coloured in orange in Fig.\ref{acmob}.
	\item Given two vertices $i,j$ of $\mob$ such that $1\leq i< j\leq n$, we denote by $(i,j)$ the unique c-arc joining these two vertices. These are coloured in green in Fig. \ref{acmob}.
\item An arc of $S=\poly n,\holed n, \mob[n]$ is called \emph{minimal} if it separates a disk homeomorphic to triangle $\poly 3$ from the surface. In Fig. \ref{accrown} and \ref{acmob}, the blue arcs in the top right panels are minimal arcs.
\end{enumerate}
\end{note}

Next we introduce the arc complex of every surface.
	
Let $S$ be any of the three surfaces defined above. Let $\mathcal X$ be the set of homotopy classes of all non-trivial arcs. Let $\mathcal C\subset \mathcal X$ be the subset containing the homotopy classes of c-arcs. Finally, let $\mathcal B\subset \mathcal X$ be the subset containing the homotopy classes of b-arcs.
 \begin{figure}[th!]
 	\centering
 	\includegraphics[width=15cm]{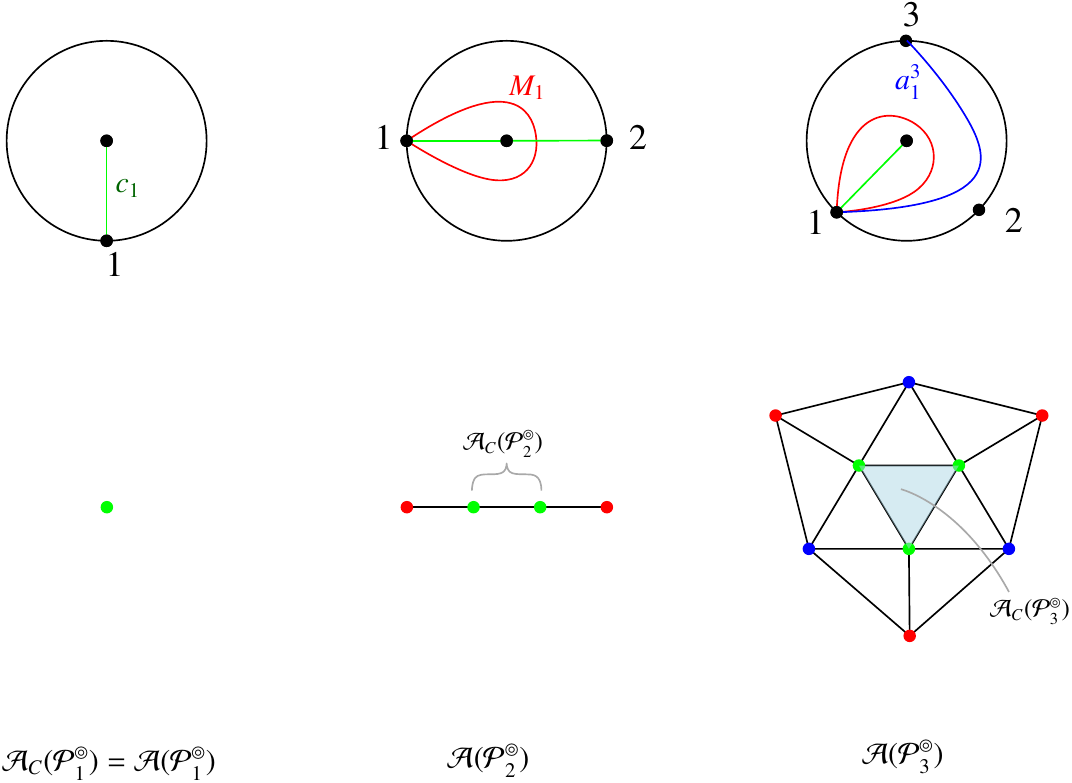}
 	\caption{The arc complexes of $\holed 1, \holed 2$ and $\holed 3$}
 	\label{accrown}
 \end{figure}
	\begin{defi}
	The arc complex $\ack K S$ generated by $K\subset \mathcal X$ of the surface $S$ is a simplicial complex whose 
	\begin{itemize}
	\item 0-simplices are given by the elements of $K$;
	\item for $k\geq 1$, every $k$-simplex is given by a $(k+1)$-tuple of pairwise disjoint and distinct homotopy classes.
	\end{itemize}
	\end{defi}
\begin{figure}[th!]
	\centering
	\includegraphics[width=15cm]{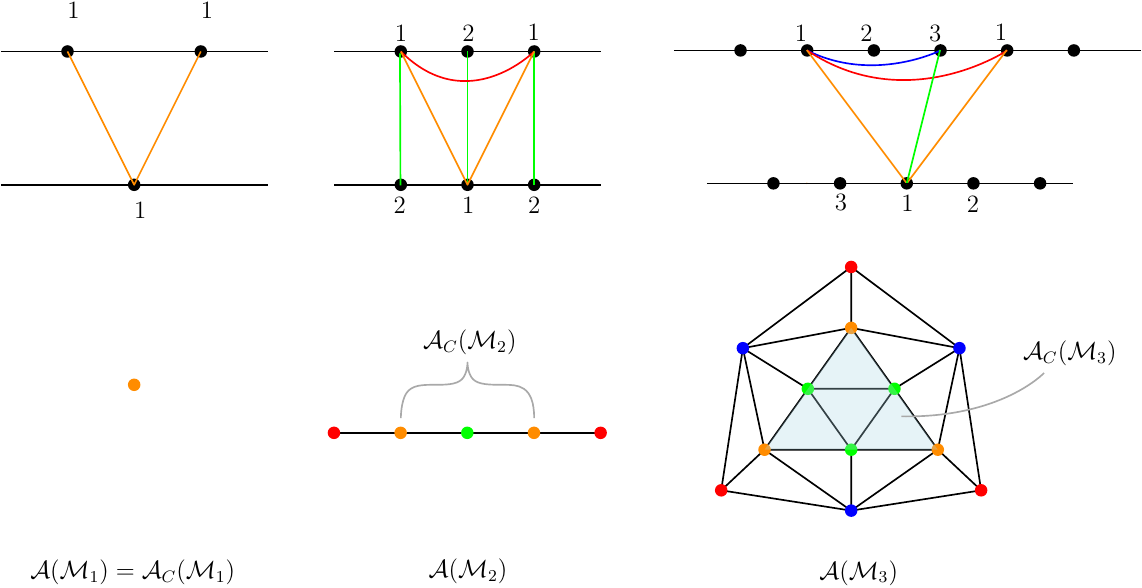}
	\caption{The arc complexes of $\mob[1], \mob[2]$ and $\mob [3]$}
	\label{acmob}
\end{figure}

The complex $\ack XS$ is called the full arc complex and is denoted simply by $\ac S$. In Fig. \ref{accrown} and \ref{acmob}, the full arc complexes of the two surfaces $\holed n$ and $\mob$ have been illustrated, for $n=1,2,3$.
\begin{defi} For $S=\holed n, \mob$, the complexes $\ack {\mathcal C}S$ and $\ack {\mathcal B}S$ are called the internal and the boundary arc complexes. A simplex of $\ack {\mathcal C}S$  (resp. $\ack {\mathcal B}S$) is called a \emph{core} (resp. \emph{boundary}) simplex.
\end{defi}
\begin{defi}
For $S=P(m,n)$, let $K$ be the set of all arcs that have one blue and one red endpoint. Then the arc complex $\ack K {P(m,n)}$ will be denoted as $\acs{m,n}$. For $i\in \intbra[1,m]$ and $j\in \intbra[1,n]$, an arc joining the $i$-th blue vertex with the $j$-th red vertex will be denoted by $(i,j)$.
\end{defi}
\begin{figure}[th]
	\centering
	\includegraphics[width=12cm]{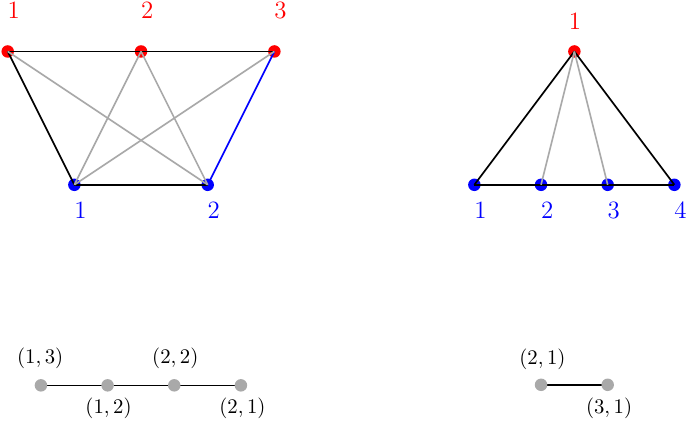}
	\caption{The arc complexes $\mathcal A_{2,3}$, $\mathcal A_{4,1}$ are strongly collapsible.}
	\label{intstripbase}
\end{figure}

\begin{Ex}
The internal arc complex $\ack {\mathcal C}{\holed n}$ of a crown $\holed n$ is an $(n-1)$-simplex whose vertices are given by $[c_i]$, for $i\in \intbra$.  
\end{Ex}
\begin{rem}\label{embed}
If the surface $S$ embeds into the surface $S'$ and $K\subset K'$, then the complex $\ack K S$ embeds into the bigger complex $\ack {K'} {S'}$. 
\end{rem}
\begin{rem}
The boundary arc complexes of $\holed n$ and $\mob$ are isomorphic. 
\end{rem}
\begin{rem}
Let $a,b$ be two non-homotopic arcs. Then the 0-simplex $[a]$ is vertex-dominated by the 0-simplex $[b]$ if and only if any arc disjoint from $a$ is also be disjoint from $b$. 
\end{rem}

	\begin{figure}
		\centering
		\includegraphics[width=15cm]{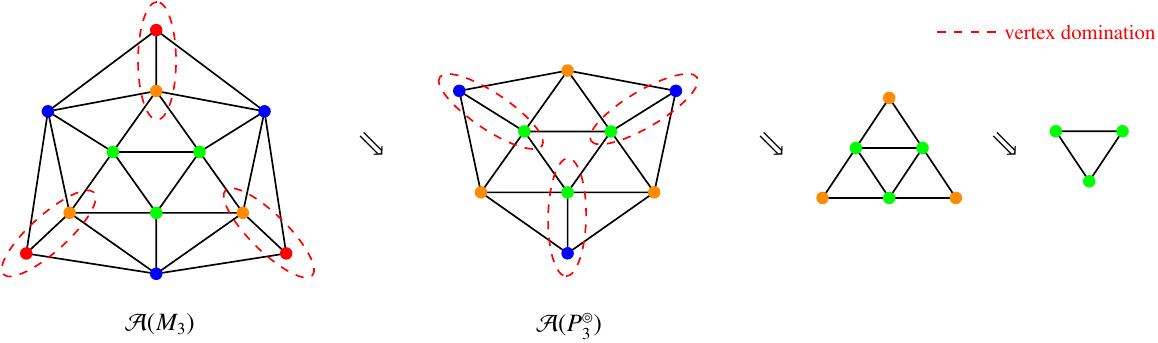}
		\caption{A coincidence in dimension 2}
		\label{collapse1}
	\end{figure}
\subsection{Tiles}
	\begin{defi}
		Let $\sigma$ be a simplex of the arc complex $\ac S$ of a surface $S$. Let $a_1,\ldots, a_k$ be pairwise disjoint and distinct arcs such that the 0-simplices of $\sigma$ are given by $\sigma^{(0)}=\bigcup\limits_{i=1}^k\{ [a_i]  \}$.
		Then we say that $\sigma$ decomposes the surface $S$ into the \emph{tiles} $\del_1,\ldots, \del_p$ if $$S\setminus \bigcup\limits_{i=1}^k a_i=\del_1\sqcup\ldots\sqcup\del_p,$$ where $\del_1,\ldots, \del_p$ are the connected components. 
\end{defi} 

\emph{Notation: }A b-arc of a (resp. non-orientable) crown $\holed n$ (resp.\ $\mob$) decomposes the surface into two tiles: one homeomorphic to a polygon and one homeomorphic to a (resp. non-orientable) crown with at most $n+1$ vertices. Given two vertices $i,j$ such that $1\leq i\leq j \leq n$, there are at most two b-arcs joining them. When $j-i>1$, let $a_i^j$ be the b-arc  whose polygonal tile is homeomorphic to $\poly {j-i+1}$, containing the vertices $i, i+1,\ldots,j$; its non-polygonal tile is homeomorphic to $\poly{n-j+i+1}$ (resp.$\mob[n-j+i+1]$), containing the vertices $j+1,\ldots, n, 1\ldots, i$. When $j-i<n-1$, we denote the second b-arc by $a_j^i$. See Fig. \ref{notation}.
	\begin{figure}[th]
	\centering
	\includegraphics[width=12cm]{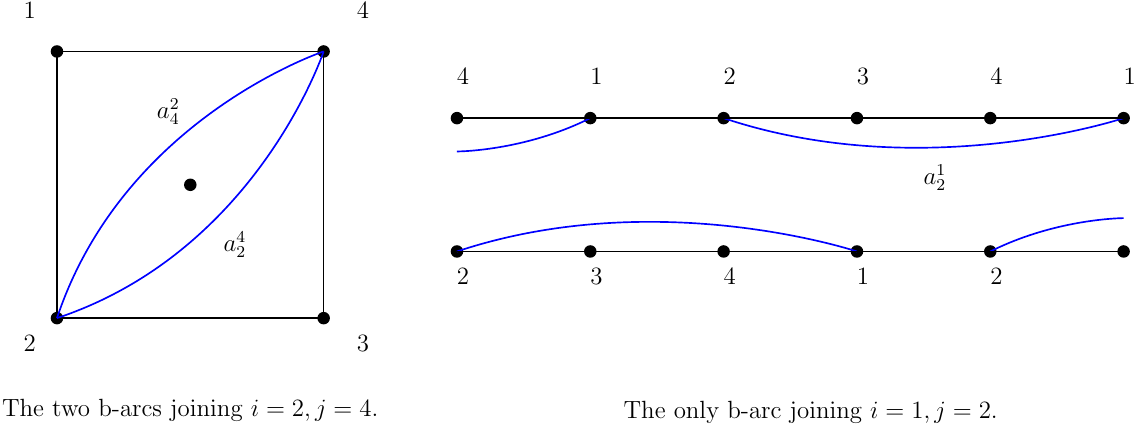}
	\caption{}
	\label{notation}
\end{figure}

A simplex of $\ac S$ is called a \emph{triangulation} if it decomposes the surface into tiles homeomorphic to triangles, $\poly 3$. These are the maximal simplices of $\ac S$. 

\begin{Ex}\label{fanmob}
	For every $i\leq j\leq k\in\intbra$, let $\fan{i}{j,k}$ be the simplex of $\ac{\mob}$ whose 0-simplices are given by the isotopy classes of the c-arcs $(i,j), (i,j+1),\ldots, (i,k)$. When $i=j=k$, this simplex is maximal in $\ac\mob $ and is called a \emph{fan triangulation} based at the vertex $i$, denoted by $\ffan[i]$. See Fig. \ref{collapse1}.
\end{Ex}
\begin{rem}
	Any triangulation of the surfaces $\holed n$ and $\mob$ always has at least one $c$-arc. In fact, it is possible to triangulate the surface using only $c$-arcs. 
\end{rem}
Wilson \cite{wilson} proved the following theorem about the shellability of the arc complexes of the two type of crowns:
\begin{thm}[Wilson]
	\begin{enumerate}[label=\alph*)]
		\item For $n\geq 3$, the full arc complex $\ac{\poly n}$ of a convex $n$-gon is a shellable sphere of dimension $n-4$.
		\item For $n\geq 1$, the internal arc complex $\ack{\mathcal{B}}{\mob}$ of a non-orientable crown $\mob$ is a shellable pseudo-manifold of dimension $n-1$.
		\end{enumerate}
\end{thm}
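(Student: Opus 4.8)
The plan is to address the two parts of Wilson's theorem through a common scheme: first verify that each complex is a pseudomanifold of the stated dimension by analysing the flip operation on its codimension-one faces, and then produce an explicit recursive shelling. Only part (a) yields a sphere, so only there will I appeal to the Danaraj--Klee criterion \cite{danaraj} that a shellable pseudomanifold without boundary is a PL-sphere.

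For part (a), I would begin by recording that $\ac{\poly n}$ is pure of dimension $n-4$: every triangulation of a convex $n$-gon uses exactly $n-3$ diagonals, so each facet spans $n-3$ vertices. A ridge is a set of $n-4$ pairwise non-crossing diagonals, decomposing $\poly n$ into one quadrilateral tile together with $n-4$ triangles; the quadrilateral has exactly two diagonals, so the ridge lies in exactly two facets and the complex has no boundary. Strong connectedness of the dual graph is the classical flip-connectedness of polygon triangulations. To shell, I would induct on $n$ along the boundary edge $\{n,1\}$: in any triangulation this edge sits in a unique triangle $\{1,j,n\}$, which cuts $\poly n$ into the polygon on $1,\dots,j$ and the polygon on $j,\dots,n$. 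Ordering facets first by the apex $j$ and then, for each fixed $j$, by the product of the inductively built shellings of the two smaller arc complexes gives a candidate order. The block of facets through $\{1,j,n\}$ is the join of the simplex on the (at most two) new diagonals with $\ac{\poly j}$ and $\ac{\poly{n-j+1}}$, hence shellable by the inductive hypothesis together with the classical fact that a join of shellable complexes is shellable; one then checks that each block glues onto its predecessors along a pure codimension-one subcomplex. With $\ac{\poly n}$ a shellable pseudomanifold without boundary, \cite{danaraj} identifies it as a sphere of dimension $n-4$.

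For part (b), I would run the same scheme on the internal arc complex $\ack{\mathcal C}{\mob}$ generated by the core arcs, using the fan simplices $\fan{i}{j,k}$ and the fan triangulations $\ffan[i]$ of Example \ref{fanmob} as building blocks. The first task is to establish purity, namely that every inclusion-maximal family of disjoint core arcs triangulates $\mob$ into triangles, so that $\ack{\mathcal C}{\mob}$ is pure of dimension $n-1$, together with strong connectedness of the corresponding core-flip graph. The decisive structural difference from (a) is the presence of boundary: removing a single core arc from a triangulation leaves a tile whose only re-triangulation within the core complex may coincide with the original, because the competing flip is realised by a boundary arc that has been excluded. Such ridges lie in a single facet, and they are exactly what makes $\ack{\mathcal C}{\mob}$ a pseudomanifold \emph{with} boundary rather than a sphere, so no appeal to \cite{danaraj} is possible or needed. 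The shelling itself is built inductively by cutting along the innermost one-sided tile and ordering facets by the vertex at which their fan is based, each block reducing to a join of a smaller non-orientable crown complex with a polygon arc complex, mirroring the decomposition used in (a).

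The step I expect to be the genuine obstacle is the verification of the shelling condition in the recursive step for (b), and in particular the bookkeeping of which ridges are interior and which lie on the boundary of the pseudomanifold. Because the core curve is one-sided, the two flips available at a core ridge are not interchangeable --- one of them can leave the core complex entirely --- so the argument that each facet meets the union of its predecessors in a pure codimension-one subcomplex cannot simply be transported from the orientable case, but must be carried out in tandem with the inductive ordering, keeping careful track of the non-orientable tile at every stage.
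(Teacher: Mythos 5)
First, a point of order: the paper does not prove this statement at all --- it is quoted from Wilson \cite{wilson}, so there is no internal proof to compare yours against; your attempt has to be judged on its own. On that basis, your part (a) is a sound outline of the classical argument: purity, the quadrilateral-tile count showing each ridge lies in exactly two facets, flip-connectedness, the recursive shelling via the unique triangle $\{1,j,n\}$ on the boundary edge, and the final appeal to Danaraj--Klee all match the route the paper's introduction attributes to Wilson (shellability plus \cite{danaraj} for sphericity). You also correctly emended the statement's typo: $\ack{\mathcal B}{\mob}$ must mean $\ack{\mathcal C}{\mob}$, since a maximal system of disjoint b-arcs consists of one maximal arc $M_i$ together with a triangulation of the resulting $\poly{n+1}$, i.e.\ $n-1$ arcs, so the boundary complex is pure of dimension $n-2$, while internal triangulations have $n$ arcs, matching the stated dimension $n-1$.

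Part (b), however, contains a genuine gap beyond the verification you openly defer. Your shelling is organised by ``ordering facets by the vertex at which their fan is based,'' which presupposes that every facet of $\across$ is a fan $\ffan[i]$, or at least contains a unique loop $L_i$ serving as a base. This is false for $n\geq 3$: in $\mob[3]$ the three non-maximal c-arcs $(1,2)$, $(2,3)$, $(1,3)$ are pairwise disjoint (each pair lies in a common fan, and arc complexes are flag, since geodesic representatives that are pairwise disjoint are simultaneously disjoint), and the Euler-characteristic count ($3$ arcs, $3$ triangular tiles) shows they form a facet of $\across[3]$ containing no loop $L_i$ whatsoever. So your block decomposition does not cover the facet set, and ``cutting along the innermost one-sided tile'' is not well defined for such triangulations. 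In addition, purity of $\across$ --- that every inclusion-maximal family of disjoint c-arcs has exactly $n$ elements --- is stated as a ``first task'' but never argued, and the shelling condition across blocks, which you yourself identify as the decisive obstacle, is left entirely open. As it stands, part (b) is a plausible program with a correctly diagnosed boundary structure (ridges whose second flip is a b-arc lie in a single facet), but not a proof.
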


\paragraph{Botany.}When $S=\holed n$, exactly one tile in the tileset of a boundary simplex contains the vertex $0$ in its interior. When $S=\mob$, exactly one tile in the tileset of a boundary simplex is non-orientable. In both the cases, this unique tile is called the \emph{trunk}. The boundary of the trunk consists of arcs of $\sigma$ called \emph{branch arcs}. The simplex generated by the branch arcs of $\sigma$ is called the \emph{stem} and is denoted by $\Stem$. The boundary may also contain some edges of the original surface -- these are called \emph{roots}. See Fig. \ref{botany}.

\begin{figure}[th]
	\centering
	\includegraphics[width=12cm]{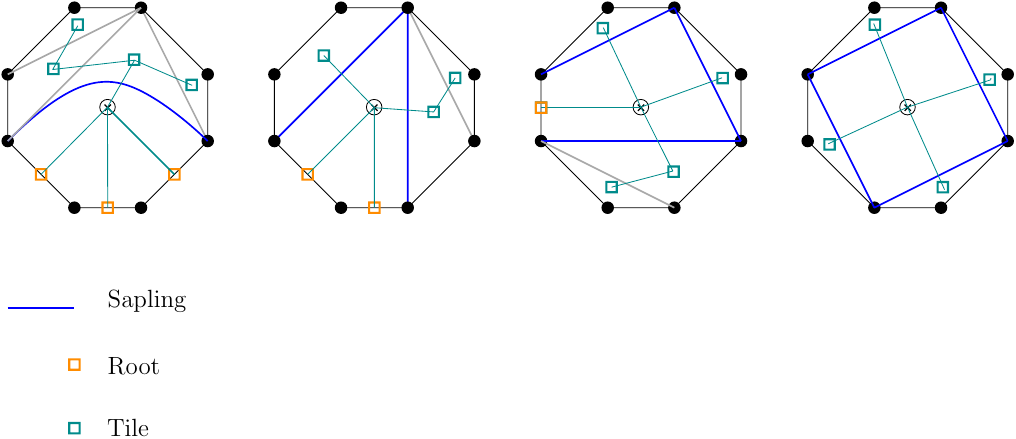}
	\caption{Boundary simplices of degree 4 in $\ac{\mob[8]}$}
	\label{botany}
\end{figure}
\begin{defi}
	Let $S=\holed n, \mob$. For any boundary simplex $\sigma \in \ac S$, the \emph{dual tree} $\dtree$ is a graph defined in the following way:
\begin{itemize}
	\item The vertices are given by the tiles of $\sigma$ and the roots, if any;
	\item Every root is joined by an edge to the trunk. Any two non-root vertices are joined by an edge if and only if the corresponding tiles share an arc of $\sigma$.
\end{itemize}
\end{defi}
The dual tree is a tree in the sense of graphs. The vertices of the dual tree that are non-roots and have degree one are called \emph{leaves}. The degree of the trunk is the sum of the total number of roots and branches. A boundary simplex $\sigma$ is said to have \emph{degree} $d\geq 1$ if the degree of the trunk of $\sigma$ in the dual tree is $d$.  A boundary simplex, all of whose branches are leaves in its dual tree, is called a \emph{sapling} and is denoted by $\sap$. Finally, let $\eta$ be a simplex of $\ac S$ with at least one b-arc and at least one c-arc. Then we denote by $\eta_b$ the subsimplex of $\eta$ spanned only by $b$-arcs. The degree of $\eta$ is then defined as the degree of the boundary simplex $\eta_b$. See Fig.\ref{botany} for boundary simplices of degree 4 of the arc complex of a non-orientable crown. 
\begin{Ex}
	A sapling of degree one is a maximal arc and a sapling of degree $n-1$ is a minimal arc. Any 0-simplex of $\ac S$ is a sapling with only one branch arc. 
\end{Ex}	


\section{Collapsibility of the arc complexes}\label{proofs}
In this section, we will give the proofs of our main results on collapsibility.
\subsection{The arc complex of a crown}
Firstly, we prove the strong collapsibility of the arc complex of a crown. 


\begin{thm}\label{strcollhole}
	For $n\geq 1$, the full arc complex $\ac{\holed n}$ of a crown $\holed n$ is strongly collapsible.
\end{thm}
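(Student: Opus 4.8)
The plan is to show that $\ac{\holed n}$ strong collapses onto its internal arc complex $\ack{\mathcal C}{\holed n}$, which is the $(n-1)$-simplex on the c-arcs $c_1,\dots,c_n$ and hence is strongly collapsible to a point; since strong collapses compose, this proves the theorem. Because the $c_i$ are pairwise disjoint, $\ack{\mathcal C}{\holed n}$ is exactly the subcomplex of $\ac{\holed n}$ obtained by deleting every b-arc $0$-simplex. The whole content is therefore to delete the b-arcs by a sequence of strong collapses, and I will do this one degree-layer at a time.

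For a b-arc $a_i^j$ I take the c-arc $c_i$ at one of its endpoints as candidate dominating vertex. By the domination criterion (any arc disjoint from $a_i^j$ must also be disjoint from $c_i$) and since $c_i$ and $a_i^j$ already share the vertex $i$, the vertex $c_i$ dominates $a_i^j$ precisely when no arc is simultaneously disjoint from $a_i^j$ and crossing $c_i$; an arc crosses $c_i$ exactly when $i$ lies in the interior of its polygonal tile. Such an obstructing arc is necessarily a b-arc (distinct c-arcs never cross), and the key lemma I need is a degree-monotonicity statement: any b-arc that is disjoint from $a_i^j$ and has $i$ interior to its polygon must have a polygon properly containing the polygon $\{i,\dots,j\}$ of $a_i^j$, and hence strictly smaller degree. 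This follows because two b-arcs are disjoint iff their polygonal tiles are nested or disjoint, and here $i$ is at once an endpoint of one polygon and an interior vertex of the other, which forces proper nesting.

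This monotonicity drives an induction on the degree. Let $K_d$ be the subcomplex of $\ac{\holed n}$ spanned by all c-arcs together with the b-arcs of degree at least $d$, so that $K_1=\ac{\holed n}$ and $K_n=\ack{\mathcal C}{\holed n}$. At stage $d$ I delete all b-arcs of degree exactly $d$. By the lemma, every arc of $K_d$ obstructing the domination of a degree-$d$ b-arc $a_i^j$ by $c_i$ has degree $<d$, so it was already removed and is absent from $K_d$; moreover two b-arcs of the same degree cannot obstruct one another, since the required nesting is strict. Hence inside $K_d$ every degree-$d$ b-arc is dominated by a vertex of $K_{d+1}$, and \thmref{barmak}\,\ref{simultcoll} yields $K_d\Searrow K_{d+1}$. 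Composing $K_1\Searrow\cdots\Searrow K_n$ gives $\ac{\holed n}\Searrow\ack{\mathcal C}{\holed n}$, and a simplex strong collapses to a vertex.

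The main obstacle is the degree-monotonicity lemma, i.e.\ proving the disjointness-implies-nesting statement carefully for b-arcs of the crown, where one must keep track of the tile containing the interior marked point $0$; the maximal b-arcs $M_i$ form the base layer (degree $1$, no obstructing arc, dominated by $c_i$ from the outset), while the minimal b-arcs are removed last.
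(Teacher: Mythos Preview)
Your proof is correct and follows essentially the same approach as the paper: both filter the b-arcs by the size of the crown tile (your ``degree'' $d$ coincides with the paper's index $k$, since a single b-arc with trunk $\holed{k}$ has degree $k$), delete them layer by layer, and dominate each b-arc by the c-arc $c_i$ at one of its endpoints, invoking \thmref{barmak}\,\ref{simultcoll} at each stage. The only presentational difference is that where the paper names the single obstructing neighbour $a_{j}^{i+1}\in\mathcal B_k$ explicitly, you instead prove the more general nesting/monotonicity lemma that any obstructing b-arc must have strictly larger polygonal tile (hence strictly smaller degree); this is a minor variation, not a different route.
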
	
\begin{proof}
	We will prove that the complex $\ac{\holed n}$ strongly collapses onto $\ack C{\holed n}$, which is an $(n-1)$-simplex. 
	For $k\in \intbra$, let $\mathcal{B}_k$ be the set of all arcs that decompose the surface into two tiles homeomorphic to $\holed {k}$ and $\poly {n-k+2}$, respectively. Define the following subcomplexes of $\ac{\holed n}$:
	\[ 
	Y^k:=\left\{
	\begin{array}{ll}
	\ac{\holed n}, & \text{ when } k=0,\\
	 Y^{k-1}\smallsetminus \{[a] \mid a\in \mathcal{B}_k
 \}& \text{ when } k \in \intbra[1,n-1] \\
	\end{array}
	\right.
	\]
	\begin{claim}
		For $k=0,\ldots, n-2$, $Y^k$ strongly collapses onto $Y^{k+1}$.
	\end{claim}
	We prove this claim by induction on $k$. \\
	\emph{Base step:} The set $\mathcal{B}_1$ comprises of all the maximal arcs of $\holed n$. 
	Indeed, any maximal arc $M_i$ decomposes the surface $\holed n$ into a tile homeomorphic to $\poly{n+1}$ and a tile homeomorphic to $\holed 1$, containing the vertex $0$ in its interior and the vertex $i$ in its boundary. Hence the link of the 0-simplex $[M_i]$ in the full arc complex is given by 
	\begin{align*} 
	\Link {[M_i]}{Y_0} &= \ac{\poly{n+1}} \Join \ac {\holed 1}\\
	& = \s{n-3}\Join [c_i].
	\end{align*}
	This shows that for every $i\in \intbra$, the 0-simplex $[M_i]$ is vertex-dominated by $[c_i]$. 
	Using Theorem \ref{barmak}\ref{simultcoll}, we get that $Y^0\Searrow Y^1$. This finishes the base step $k=0$.\\
	\emph{Induction step:} Suppose that for $k'=0,\ldots, k-1$, we have $Y^{k'}\Searrow Y^{k'+1}$. We need to show that $Y^{k} \Searrow Y^{k+1}$. 
	Let $a\in \mathcal{B}_{k+1}$ have endpoints on the vertices $i\neq j$. Without loss of generality, suppose that the tile homeomorphic to $\holed {k+1}$ also contains the vertex $i+1$. Then following Notation \ref{notation}, we have that $a=a_j^i$. We claim that the 0-simplex $[a]$ is vertex-dominated by the c-arc $[c_i]$. Any triangulation containing the arc $a$ either contains the arc $c_{i}$ or the b-arc $\aij {j}{i+1}$ joining $i+1$ and $j$ which intersects $c_i$. See Fig. \ref{crownacsc}.  But the arc $\aij {j}{i+1}$ decomposes the surface into $\holed k$ and $\poly {n-k+2}$. So it lies in $\mathcal{B}_k$. The 0-simplex $[\aij {j}{i+1} ]$ is absent from $Y^k$, by induction hypothesis. Thus every arc in $\mathcal{B}_{k+1}$ is vertex dominated in $Y^k$. 
Using Theorem \ref{barmak}\ref{simultcoll}, as in the base step, we conclude that $Y^k \Searrow Y^{k+1}$.
Since any boundary arc lies in $\mathcal{B}_k$ for some $k$, we get that $Y^{k-1}= \ack C {\holed n}$. This concludes the proof.
	\end{proof}
\begin{figure}
	\centering
	\includegraphics[width=5cm]{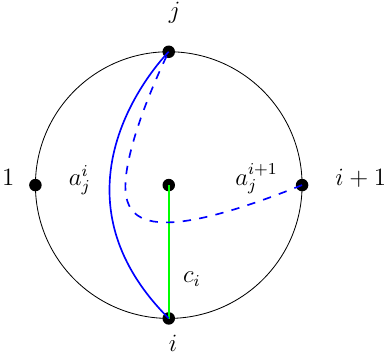}
	\caption{An arc $a_j^i\in \mathcal{B}_{k+1}$ is vertex-dominated by $c_i$ in $Y^k$.}
	\label{crownacsc}
\end{figure}
\begin{rem}
Using a similar argument, it can be shown that the arc in $a_j^i$ is vertex-dominated by $c_j$ in $Y^k$.
\end{rem}
\begin{cor}\label{acrownball}
For $n\geq1$, the arc complex of an orientable crown $\holed n$ is a combinatorial ball of dimension $n-1$. 
\end{cor}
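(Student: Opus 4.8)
The plan is to deduce the statement from the strong collapsibility of \thmref{strcollhole} together with Whitehead's \thmref{whitehead}, the only missing ingredient being that $\ac{\holed n}$ is a combinatorial manifold with boundary. I would prove, by induction on $n$, the combined assertion that $\ac{\holed n}$ is a combinatorial $(n-1)$-ball. The base case $n=1$ is immediate: the maximal arc $M_1$ bounds a bigon and is therefore trivial, so $c_1$ is the only non-trivial arc and $\ac{\holed 1}$ is a single $0$-simplex, which is a combinatorial $0$-ball.

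For the inductive step, assume that $\ac{\holed m}$ is a combinatorial $(m-1)$-ball for every $m<n$. I would first establish the manifold structure by computing the link of every $0$-simplex, using the standard identification of the link of a vertex $[a]$ with the arc complex of the surface cut along $a$, together with the fact that the arc complex of a disconnected surface is the join of the arc complexes of its components. Cutting $\holed n$ along a c-arc $c_i$ yields a polygon $\poly{n+2}$, so $\Link{[c_i]}{\ac{\holed n}}=\ac{\poly{n+2}}=\s{n-2}$ by the classical sphericity of polygon arc complexes \cite{wilson}; this is a combinatorial $(n-2)$-sphere. A b-arc in $\mathcal B_k$ decomposes the surface into $\poly{n-k+2}$ and $\holed k$, so its link is $\ac{\poly{n-k+2}}\Join\ac{\holed k}=\s{n-k-2}\Join\ac{\holed k}$; by the inductive hypothesis $\ac{\holed k}$ is a combinatorial $(k-1)$-ball, and since the join of a combinatorial $p$-sphere with a combinatorial $q$-ball is a combinatorial $(p+q+1)$-ball, this link is a combinatorial ball of dimension $(n-k-2)+(k-1)+1=n-2$. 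Hence every vertex link is a combinatorial $(n-2)$-sphere or a combinatorial $(n-2)$-ball, and the b-arc vertices furnish links of the latter kind, so $\ac{\holed n}$ is a combinatorial $(n-1)$-manifold with boundary.

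It then suffices to feed in collapsibility. By \thmref{strcollhole} the complex $\ac{\holed n}$ is strongly collapsible, hence collapsible by \thmref{barmak}\ref{strong2simple}, so \thmref{whitehead} identifies it as a combinatorial $(n-1)$-ball and closes the induction. I expect the main obstacle to be the verification of the combinatorial manifold structure: concretely, justifying the identification of vertex links with arc complexes of cut surfaces, checking that the resulting joins of spheres and balls are piecewise-linear balls of the correct dimension, and handling the degenerate small-$n$ tiles, namely the bigons and triangles whose polygon arc complexes are empty, so that the link computation remains valid throughout the whole range of $k$.
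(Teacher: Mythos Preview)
Your proposal is correct and follows essentially the same route as the paper: establish the combinatorial manifold structure by computing vertex links (spheres for c-arcs via $\ac{\poly{n+2}}$, balls for b-arcs via the join $\ac{\poly{n-k+2}}\Join\ac{\holed k}$ and the inductive hypothesis), then invoke \thmref{strcollhole} together with \thmref{barmak}\ref{strong2simple} and Whitehead's \thmref{whitehead}. The paper additionally verifies the pseudo-manifold property (each codimension-one face lies in at most two top simplices) and checks the base cases $n=1,2,3$ from the figures rather than just $n=1$, but neither difference is essential---the pseudo-manifold check is redundant once the vertex links are known to be PL spheres or balls, and your single base case suffices for the induction.
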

\begin{proof}

We know already that the arc complex is pure of dimension $n-1$ and strongly connected. 
Firstly, we show that every $(n-2)$-simplex $\sigma$ is contained in at most two $(n-1)$-simplices. Suppose that $\sigma $  is a boundary simplex.  Then it decomposes the surface into triangles and a crown $\mob[1]$ with only one vertex, say $i\in \intbra$. The only way to triangulate this tile is to take the c-arc $c_i$. So $\sigma$ is contained in the unique maximal simplex $\sigma\cup \{[c_i]\}$. Next, we suppose that $\sigma$ contains an internal arc $c$. Cutting the surface along this arc we get a surface homeomorphic to the convex $n+2$-gon. The restriction of $\sigma$ on this polygon gives a $(n-3)$-simplex, say $\sigma'$, of $\ac{\poly {n+2}}$. Since the latter is a $(n-2)$-pseudo-manifold, $\sigma'$ is contained in at exactly two $(n-2)$-simplices of $\ac{\poly{n+1}}$. Taking the join of these two maximal simplices with $[c]$ we get that $\sigma$ is contained in exactly two $(n-1)$-simplices of $\ac {\holed n}$. So we get that the simlicial complex $\ac {\holed n}$ is a pseudo-manifold. \\
Now, we prove that the arc complex is a combinatorial ball of dimension $n-1$, by induction. From Fig. \ref{accrown}, we know that the statement is true for $n=1,2,3$. Suppose that the statement is true for $n'=1,\ldots, n-1$. 
Let $\al$ be a c-arc. Then it decomposes the surfaces into a $\poly {n+2}$. Its link is then given by $$\Link{[\al]} {\ac{\holed n}}=\ac {\poly{n+2}}= \s{n-2}.$$
Next, suppose that $\al$ is a b-arc of the form $a_i^j$ with $j-i\geq2$. Then it decomposes the surface into a polygon $\poly{j-i+1}$ and a crown $\holed{n-j+i +1}$. Then its link is given by
\begin{align*}
\Link{[\al]} {\ac{\holed n}}&=\ac{\poly{j-i+1}}\Join \ac{\holed{n-j+i+1}}\\
&=\s{j-i+1-4}\Join \ball{n-j+i+1-1 }\\
&=\ball{n-2}.
\end{align*}
So we get that $\ac{\holed n}$ is a combinatorial manifold of dimension $n-1$. Finally, using Whitehead's Theorem \ref{whitehead} and our Theorem \ref{strcollhole}, we get that the full arc complex is a combinatorial $n-1$-ball.
\end{proof}

	\subsection{The inner arc complex of a non-orientable crown}

	Firstly, we prove that the inner arc complex of a non-orientable crown is collapsible.
	\begin{thm}\label{inner mobius}
		For $n\geq 1$, the inner arc complex $\ack C{\mob}$ of a non-orientable crown $\mob$ is strongly collapsible.
	\end{thm}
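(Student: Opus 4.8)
The plan is to follow the template of Theorem~\ref{strcollhole}: produce a chain of strong collapses that peels off the c-arcs of $\mob$ in batches, where each batch is a set of simultaneously vertex-dominated $0$-simplices, and apply Theorem~\ref{barmak}\ref{simultcoll} at every stage. The essential input, just as in the orientable case where the whole analysis rested on the tile-decomposition of b-arcs, is a clean combinatorial description of \emph{when two c-arcs of $\mob$ are disjoint}. I would first record the two families of generators, the connecting c-arcs $(i,j)$ for $1\le i<j\le n$ and the maximal c-arcs $L_i$, together with the basic disjointness facts that come for free from the fan triangulations of Example~\ref{fanmob}: since all the arcs in $\ffan[i]$ are pairwise disjoint, any two c-arcs sharing the endpoint $i$ are disjoint, and $L_i$ is disjoint from each $(i,j)$.

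The first phase is to strongly collapse $\ack{\mathcal C}{\mob}$ onto the subcomplex $\mathcal D$ generated by the connecting c-arcs alone, by deleting the maximal c-arcs $L_1,\dots,L_n$. I expect that the only c-arcs disjoint from $L_i$ are exactly the arcs $(i,j)$ with $j\in\intbra\setminus\{i\}$; these are pairwise disjoint (they share the endpoint $i$), so $\Link{[L_i]}{\ack{\mathcal C}{\mob}}$ is the simplex they span, which is a cone, and hence $[L_i]$ is vertex-dominated by every $[(i,j)]$. As each dominating vertex lies in $\mathcal D$, Theorem~\ref{barmak}\ref{simultcoll} gives $\ack{\mathcal C}{\mob}\Searrow\mathcal D$. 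The second phase is to show that $\mathcal D$ is strongly collapsible, which I would attempt by induction on $n$, viewing $\ack{\mathcal C}{\mob[n-1]}$ as the subcomplex of arcs avoiding the vertex $n$ and trying to collapse the connecting c-arcs $(1,n),\dots,(n-1,n)$ onto it, each dominated by a suitable arc that avoids $n$. The base cases $n\le 3$ are immediate by inspection; for $n=3$ the arcs $(1,2),(1,3),(2,3)$ are mutually disjoint, so $\mathcal D$ is a $2$-simplex, and after reinserting the $L_i$ one recovers the subdivided triangle visible in Fig.~\ref{acmob}.

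The main obstacle is precisely the disjointness bookkeeping that both phases rely on, and it is genuinely more delicate than in the orientable crown. Because every c-arc crosses the one-sided core curve, two c-arcs that share no endpoint may or may not be made disjoint depending on how they run through the cross-cap, so the naive ``non-crossing chord'' rule of a polygon fails and must be replaced by the correct centrally-symmetric condition on the orientation double cover (an annulus with $n$ marked points on each boundary circle, the c-arc $(i,j)$ lifting to the spanning pair $\{i\!-\!\bar\jmath,\,j\!-\!\bar\imath\}$ and $L_i$ to the self-symmetric arc $i\!-\!\bar\imath$). I would spend the bulk of the argument establishing this rule and deducing from it that the relevant links are cones; in particular I must confirm the expected shape of $\Link{[L_i]}{\ack{\mathcal C}{\mob}}$ and must pin down the correct dominating arc for each $(i,n)$ in the inductive step. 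Once the disjointness lemma is proven the rest is the bookkeeping of Theorem~\ref{barmak}\ref{simultcoll} applied along the chain, using that a composite of strong collapses is again a strong collapse, which is what yields strong collapsibility of $\ack{\mathcal C}{\mob}$.
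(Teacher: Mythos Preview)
Your plan is close in spirit to the paper's proof, but there is a genuine slip in the inductive set-up, and the packaging differs in a way that matters.

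Phase~1 is correct: each $[L_i]$ lies in the unique maximal simplex $\ffan[i]$, so its link is the simplex on $\{[(i,j)]:j\neq i\}$ and Theorem~\ref{barmak}\ref{simultcoll} gives $\ack{\mathcal C}{\mob}\Searrow\mathcal D$. The paper uses exactly this observation, but only for the single arc $L_{n+1}$ at the start of each inductive step, not for all $L_i$ at once. That difference is what causes your slip in Phase~2: once all the $L_j$ are gone, the subcomplex of $\mathcal D$ spanned by arcs avoiding the vertex $n$ is $\mathcal D_{n-1}$, \emph{not} $\ack{\mathcal C}{\mob[n-1]}$ (the latter contains $L_1,\dots,L_{n-1}$, which you have already deleted). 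So either your induction hypothesis must be ``$\mathcal D_k$ is strongly collapsible'' and the theorem is then Phase~1 plus this, or you should drop Phase~1 and in each inductive step remove only the single $L$ at the new vertex, landing on $\ack{\mathcal C}{\mob[n-1]}$ as claimed. The paper takes the second route.

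The paper also pins down the dominating vertices you leave unspecified. Removing the arcs at the new vertex $n+1$ in the order $L_{n+1},(n,n+1),(n-1,n+1),\dots,(1,n+1)$, it observes that cutting $\mob[n+1]$ along $(i,n+1)$ yields an integral strip with $x$-vertices $n+1,1,\dots,i$ and $y$-vertices $i,\dots,n+1$; this is precisely the concrete incarnation of the double-cover/annulus picture you sketch, so your planned ``disjointness lemma'' and the paper's strip description are the same thing. After the previous deletions no remaining arc in the link touches the $y$-vertex $n+1$, which forces the arc $(1,i)$ into every triangulation and gives $\Link{[(i,n+1)]}{\,\cdot\,}=\acs{i,n-i+2}\Join[(1,i)]$; hence $(i,n+1)$ is dominated by $(1,i)$. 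For $i=1$ this dominator is $L_1$, which is still present precisely because the paper did \emph{not} strip all the $L_j$ in advance. If you insist on your Phase~1, the analogous last step needs a different dominator among the connecting c-arcs (e.g.\ $(1,n-1)$), and this requires its own verification.
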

	\begin{proof}
We prove the statement by induction on the number of vertices $n$.
When $n=1$, there is only one (homotopy class of) arc, namely the maximal c-arc $L_1$. So the inner arc complex is the 0-simplex $[L_i]$, which is strongly collapsible.

Next we suppose that the statement holds for all $n'\in \intbra$. Given the surface $\mob$, without loss of generality, we add a new vertex labeled $n+1$ between the vertices 1 and $n$ to get the surface $\mob [n+1]$. The idea is to strongly collapse the complex $\ack C{\mob[n+1]}$ onto the complex $\ack C {\mob } $ by removing all new arcs that arise in $\mob[n+1]$ due to the addition of the new vertex. All these new arcs have an endpoint at the vertex $n+1$.

Let $a_i:= [(i,n+1)]$, for $i\in \intbra[1,n+1]$. In particular, $a_{n+1}=[L_{n+1}]$.
We show that 
\begin{align*}
\across[n+1]  &\Searrow \across[n+1]   \smallsetminus \{a_{n+1} \} \\
&\Searrow \across[n+1]   \smallsetminus \{a_{n+1}, a_n \} \\
& \Searrow \ldots \\
&\Searrow  \across[n+1]   \smallsetminus \{a_{n+1}, a_n,\ldots, a_2, a_1 \} \\
&=\across[n] .
\end{align*}
		
\begin{step} 
$\across[n+1] \Searrow  \across[n+1]   \smallsetminus \{a_{n+1} \}$: the 0-simplex $[L_{n+1}]$ is contained in a unique maximal simplex, namely, $\ffan [n+1]$. So it is vertex-dominated by $[a_i]$ for any $i\in \intbra$. See Fig. \ref{collapse1}.
\end{step}
\begin{figure}[ht]
	\centering
	\includegraphics[width=10cm]{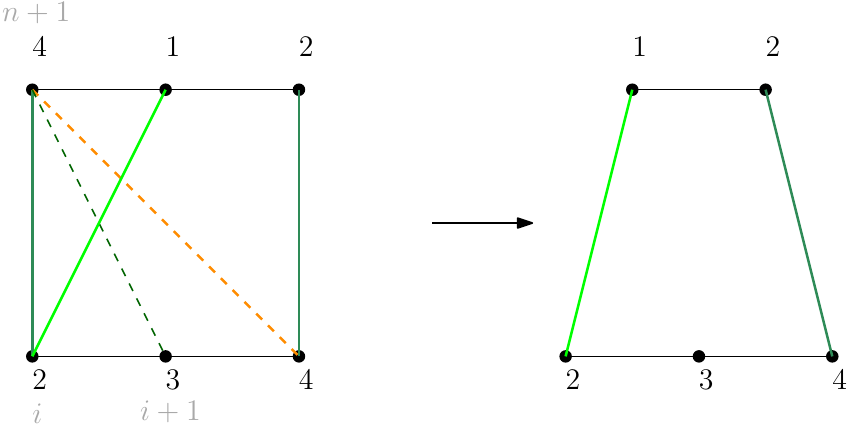}
	\caption{Step \ref{inductionstep} of Theorem \ref{inner mobius}}
	\label{induction}
\end{figure}

\begin{step} \label{inductionstep}
	Suppose that for $i'=n+1, n, \ldots, i+1$, the 0-simplex $a_{i'}$ is vertex-dominated in $\across[n+1]   \smallsetminus \{a_{n+1}, a_n, \ldots, a_{i'+1} \}.$ Then we need to show that the 0-simplex $[a_{i}]$ is vertex-dominated in $\across[n+1]   \smallsetminus \{a_{n+1}, a_n, \ldots, a_{i+1} \}.$
See Fig. \ref{induction}. The arc $a_{i}$ decomposes the Möbius strip into an integral strip whose $x$-vertices are $n+1, 1,\ldots, i$ and $y$-vertices are $i, i+1,\ldots, n+1$. By induction hypothesis, none of the 0-simplices of $ \across[n+1]   \smallsetminus \{a_{n+1}, a_n, \ldots, a_{i} \}$ corresponds to an arc that has an endpoint on the $y$-vertex labeled $n+1$. So we have that
$$\Link{a_{i}  }{ \across[n+1]   \smallsetminus \{a_{n+1}, a_n, \ldots, a_{i+1} \}   } = \acs {i,n-i+2}\Join [(1, i)].$$ So $a_{i}$ is vertex-dominated by $[(1, i)]$. This finishes the induction on $i'$. By induction hypothesis on $n$, the complex $\across[n]$ is strongly collapsible, which implies that the complex $\across[n+1]$ is strongly collapsible. This finishes the induction on $n$. 
	\end{step}

	\end{proof}

\begin{figure}[ht]
	\centering
	\includegraphics[width=10cm]{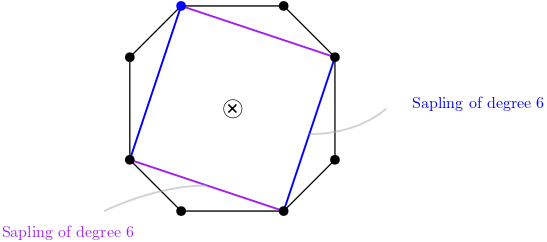}
	\caption{Two disjoint saplings of degree 6 are together forming a sapling of degree 4}
	\label{disjsap}
\end{figure}
\begin{thm}\label{mobboundarycollapse}
For all $n\geq 1$, the simplicial complex $\ac\mob$ simplicially collapses onto $\across$.
\end{thm}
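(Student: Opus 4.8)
The plan is to delete every simplex that contains a boundary arc, one b-arc at a time, using Welker's link lemma (Proposition~\ref{Welker}\ref{link}), so that the surviving complex is exactly the inner complex $\across$. Writing $X_0=\ac\mob$, I would build a chain $X_0\searrow X_1\searrow\cdots\searrow X_N=\across$ with $X_j=\fdel{[a_j]}{X_{j-1}}$ for a suitable enumeration $a_1,\dots,a_N$ of the homotopy classes of b-arcs. Since $\fdel{[a_j]}{X_{j-1}}$ only removes simplices containing $a_j$ (the vertex $[a_{j'}]$ of every other b-arc survives until its own turn), after all $N$ steps exactly the simplices spanned by c-arcs remain, so the terminal complex is $\across$. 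The only thing to arrange is that each intermediate link $\Link{[a_j]}{X_{j-1}}$ is collapsible.

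The order in which I process the b-arcs is dictated by the size of the non-orientable tile. Recall that a b-arc $a$ cuts $\mob$ into an orientable polygon $\poly m$ and a non-orientable crown $\mob[k]$ with $m+k=n+2$; call $k$ the \emph{trunk size} of $a$. The key structural observation is that two \emph{disjoint} b-arcs always have \emph{distinct} trunk sizes: if $a,a'$ are disjoint then one lies inside a single tile of the other, and a b-arc contained in the trunk tile $\mob[k]$ of $a'$ has trunk size strictly less than $k$, whereas a b-arc contained in the polygon tile of $a'$ has trunk size strictly greater than $k$. I therefore enumerate the b-arcs by increasing trunk size $k=1,2,\dots,n-1$, the order within a fixed value of $k$ being irrelevant, since two b-arcs of equal trunk size are never disjoint and hence never lie in a common simplex.

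With this order, when I reach a b-arc $a$ of trunk size $k$, every b-arc sitting in its trunk $\mob[k]$ (trunk size $<k$) has already been removed, every b-arc in its polygon tile $\poly m$ (trunk size $>k$) is still present, and no c-arc has ever been deleted. Using the standard identification of the link of an arc with the join of the arc complexes of its two complementary tiles, this gives
\[
\Link{[a]}{X_{j-1}}=\ac{\poly{m}}\Join\across[k],
\]
where $m=n-k+2$ and $\across[k]=\ack C{\mob[k]}$ is the inner complex of the trunk. By Theorem~\ref{inner mobius} together with Theorem~\ref{barmak}\ref{strong2simple}, $\across[k]$ is collapsible, so by Proposition~\ref{Welker}\ref{join} the join above is collapsible; Proposition~\ref{Welker}\ref{link} then yields the elementary collapse $X_{j-1}\searrow\fdel{[a]}{X_{j-1}}$. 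Iterating over all b-arcs produces the desired collapse $\ac\mob\searrow\across$, and notably this requires no fresh induction on $n$, only the already-established Theorem~\ref{inner mobius}.

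The step I expect to require the most care is the link identity in the partially collapsed complex $X_{j-1}$: one must check precisely that the deletions performed so far have stripped exactly the trunk-side b-arcs of $a$ while leaving the polygon side $\ac{\poly m}$ and the full trunk c-arc complex $\across[k]$ untouched, which is exactly where the \emph{disjoint $\Rightarrow$ distinct trunk size} dichotomy does all the work. The remaining points are routine: the degenerate polygon tile $\poly 3$, whose arc complex is empty so that the link reduces to the collapsible $\across[k]$; the trunk-size-one case, where the link is a cone over $\ac{\poly{n+1}}$; and the base case $n=1$, where there are no b-arcs and $\ac{\mob[1]}=\across[1]$ already.
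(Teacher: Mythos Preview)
Your ordering idea has a genuine gap: the dichotomy ``disjoint b-arcs have distinct trunk sizes'' is false. Take $\mob[5]$ and the two minimal b-arcs $a=a_1^3$ and $a'=a_3^5$. Each cuts off a triangle (vertices $\{1,2,3\}$, respectively $\{3,4,5\}$), so each has trunk size $4$; yet they are disjoint, sharing only the vertex $3$. The error is in the sentence ``a b-arc contained in the trunk tile $\mob[k]$ of $a'$ has trunk size strictly less than $k$'': when $b\subset T_{a'}$ cuts $T_{a'}$ into a polygon $R$ and a M\"obius piece $M$, the trunk of $b$ \emph{in $\mob$} equals $M$ only if $a'\subset\partial R$; if instead $a'\subset\partial M$, then $T_b=M\cup_{a'}P_{a'}$, which can have $\geq k$ vertices. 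The example above is exactly this second case.

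This breaks your link computation. With your ordering, when you reach $a=a_1^3$ the arc $a'=a_3^5$ may still be present (same trunk size), and $a'$ sits in the trunk tile of $a$; hence $\Link{[a]}{X_{j-1}}\neq\ac{\poly m}\Join\across[k]$. Worse, there is \emph{no} linear order on the b-arcs achieving your link identity: you would need $b\prec a$ whenever $b\subset T_a$, but in the example $a\subset T_{a'}$ and $a'\subset T_a$ simultaneously. The paper circumvents this by face-deleting \emph{saplings} $\sap$ (which may be multi-arc simplices) ordered by the degree of their trunk. The pair $\{a_1^3,a_3^5\}$ is itself a sapling of degree $3$, so it is removed \emph{before} the individual $0$-simplices $[a_1^3]$ and $[a_3^5]$ (saplings of degree $4$); thus when $[a_1^3]$ is finally deleted, the edge $\{a_1^3,a_3^5\}$ has already left the complex and the trunk-side of its link genuinely reduces to $\across[4]$. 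To repair your argument you must either pass to this sapling filtration or else prove collapsibility of the larger link that actually occurs.
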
	
\begin{proof}
	Let $\mathrm{Sap}^d$ be the set of all saplings of degree $d$. 
	Define
		\[ 
	Y^d:=\left\{
	\begin{array}{ll}
	\ac{\mob}, & \text{ when } d=0,\\
	\bigcap \limits_{\sigma_s\in \mathrm{Sap}^d} \fdel{\sap}{Y^{d-1}}& \text{ when } d \in \intbra[1,n-1] \\
	\end{array}
	\right.
	\]
	\begin{claim}
		For $d=0,\ldots, n-2$, $Y^{d}$ collapses onto $Y^{d+1}$.
	\end{claim}
	We prove this claim by induction on $d$. \\
	\emph{Base step:} We need to show that $Y^0$ collapses onto $Y^1$. The saplings of degree 1 correspond to the maximal b-arcs $M_i$, for $i\in\intbra$. Any such arc decomposes the surface $\mob [n]$ into two tiles homeomorphic to $\poly{n+1}$ and $\mob [1]$, containing the vertex $i$ in its boundary. Hence the link of the 0-simplex $M_i$ in $Y_0$ is given by 
	\begin{align*} 
	\Link {[M_i]}{\ac{\mob }} &= \ac{\poly{n+1}} \Join \ac {\mob[1]}\\
	& = \s{n-3}\Join [L_i].
	\end{align*}
	which shows that for every $i\in \intbra$ the 0-simplex $[M_i]$ is vertex-dominated in $Y^0$. Using Lemma \ref{barmak}\ref{simultcoll}, we get that $Y^0\Searrow Y^1$. Finally, from Theorem \ref{barmak}\ref{strong2simple}, we get that $Y^0\searrow Y^1$. This finishes the base step $d=0$.

	\emph{Induction step:} Suppose that for all $d'=1,\ldots, d-1$, the complex $Y^{d}$ collapses onto the complex $Y^{d'+1}$. We need to show that $Y^{d}$ collapses onto $Y^{d+1}$.
	
	Let $\sigma_s$ be any sapling of degree $d+1$. Then the arcs of $\sigma_s$ decompose the surface into finitely many tiles homeomorphic to $\poly{n_1},\ldots, \poly{n_p}$, and the trunk homeomorphic to $\mob[d+1]$, containing the vertices $i_1,\ldots, i_{d+1}$ of $\mob[n+1]$.
	 Then we have that
	\[ \Link \sap {Y^{d+1}}\simeq \ac{ \poly{n_1} } \Join \ldots \Join  \ac{ \poly{n_p} } \Join  \ack {C}{\mob[ d]}.\]
	This is because any b-arc inside the trunk $\mob[d+1]$ gives a sapling of degree less than $d$. By induction hypothesis, $Y^{d}$ does not contain such a sapling. 
	From Theorem \ref{inner mobius}, we have that $\ack {C}{\mob[ d]}$ is collapsible. So from Proposition \ref{Welker}\ref{join}, the link of the sapling $\sap$ is collapsible. Finally, from Lemma \ref{Welker}\ref{link} we get that $Y^{d-1}$ collapses onto $\fdel{\sigma_s}{Y^{d-1}}$.
Now if possible, let $\eta$ be a simplex of $Y^d$ containing 2 saplings of degree $d+1$. Then the degree of $\eta$ is necessarily less than $d+1$. By induction hypothesis, $Y^d$ cannot contain such a simplex. See Fig. \ref{disjsap}. Thus we can face delete the saplings of degree $d+1$ in any order. Hence we have that $Y^{d}$ collapses onto $Y^{d+1}$. This finishes the induction step. 
	Since $Y^{n-1}$ does not contain any boundary simplex, it is isomorphic to $\ack C{\holed n}$, which is collapsible. 
\end{proof}

\begin{cor}\label{acmobball}
For $n\geq 1$, the full arc complex of a non-orientable crown is a combinatorial ball of dimension $n-1$.
\end{cor}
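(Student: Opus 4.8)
The plan is to mirror the proof of Corollary~\ref{acrownball} for the orientable case: I would combine the collapsibility already in hand with a verification that $\ac\mob$ is a combinatorial manifold, and then close the argument with Whitehead's theorem.

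First I would record that $\ac\mob$ is collapsible. By Theorem~\ref{mobboundarycollapse} we have $\ac\mob \searrow \across$, and by Theorem~\ref{inner mobius} the inner arc complex $\across$ is strongly collapsible, hence simplicially collapsible by Theorem~\ref{barmak}\ref{strong2simple}. Chaining the two collapsing sequences shows that $\ac\mob$ collapses to a point.

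The substantive work is showing that $\ac\mob$ is a combinatorial $(n-1)$-manifold, which I would carry out by induction on $n$, assuming that $\ac{\mob[k]}$ is a combinatorial $(k-1)$-ball for every $k<n$ (the base cases $n=1,2,3$ being read off the figures). Purity and strong connectedness follow as for the crown: every triangulation uses $n$ arcs, so every maximal simplex has dimension $n-1$, and strong connectedness follows from Wilson's shellability. For the pseudo-manifold property I would show that each $(n-2)$-simplex $\sigma$ lies in at most two maximal simplices, splitting into the case where $\sigma$ is a boundary simplex whose trunk is $\mob[1]$ (uniquely completed by the c-arc $L_i$) and the case where $\sigma$ contains a c-arc, which I cut along to reduce to the polygon pseudo-manifold $\ac{\poly{n+2}}$. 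The link computations then distinguish the two arc types. A b-arc decomposes $\mob$ into a polygon $\poly p$ and a non-orientable crown $\mob[q]$, so its link is $\ac{\poly p}\Join \ac{\mob[q]}$, which by the classical sphericity of $\ac{\poly p}$ and the inductive hypothesis is $\s{p-4}\Join \ball{q-1}=\ball{n-2}$, a ball; for the maximal b-arc $M_i$ this specializes to $\ac{\poly{n+1}}\Join\ac{\mob[1]}=\s{n-3}\Join[L_i]=\ball{n-2}$. A c-arc, by contrast, crosses the core curve exactly once, so cutting along it yields an orientable polygon $\poly{n+2}$ and its link is $\ac{\poly{n+2}}=\s{n-2}$, a sphere. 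Since b-arcs (ball links) and c-arcs (sphere links) both occur, $\ac\mob$ is a combinatorial $(n-1)$-manifold with nonempty boundary. Finally, applying Whitehead's Theorem~\ref{whitehead} to this collapsible combinatorial manifold gives that $\ac\mob$ is a combinatorial $(n-1)$-ball.

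The main obstacle I anticipate is the combinatorial manifold verification rather than the collapsibility, which is essentially immediate. Within that, the delicate point is the c-arc link computation: one must check that cutting $\mob$ along an arc meeting the crosscap produces an orientable polygon on exactly $n+2$ vertices (so that the link has the correct dimension $n-2$), and one must set up the induction so that the non-orientable tiles $\mob[q]$ appearing in the b-arc links are genuinely of smaller complexity and covered by the inductive hypothesis.
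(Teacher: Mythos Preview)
Your proposal is correct and is precisely what the paper intends: the paper's own proof of Corollary~\ref{acmobball} reads in full ``The proof is similar to that of Corollary~\ref{acrownball}'', and you have faithfully transported that argument to the non-orientable setting, with the expected modifications (trunk $\mob[1]$ completed by $L_i$ instead of $\holed 1$ completed by $c_i$, and the inductive hypothesis now invoking $\ac{\mob[q]}$ rather than $\ac{\holed{q}}$). The collapsibility input you assemble from Theorems~\ref{mobboundarycollapse}, \ref{inner mobius} and \ref{barmak}\ref{strong2simple} is exactly the replacement for Theorem~\ref{strcollhole} needed to feed into Whitehead's Theorem~\ref{whitehead}.
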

\begin{proof}
The proof is similar to that of Corollary \ref{acrownball}.
\end{proof}

\subsection{The full arc complex of a non-orientable crown}


\begin{prop} \label{acrossnotacone}
For $n\geq 4$, the inner arc complex $\across$ is not a cone. 
\end{prop}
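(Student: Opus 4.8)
The plan is to use the elementary characterization that a finite simplicial complex $X$ is a cone if and only if some vertex of $X$ lies in \emph{every} facet (maximal simplex): if a vertex $v$ is contained in all facets then every simplex $\sigma$ satisfies $\sigma\cup\{v\}\in X$, so $X = v\Join\Link{v}{X}$, and the converse is immediate. Accordingly, to prove that $\across$ is not a cone I would show that no single c-arc is contained in all facets of $\across$. The facets I would test against are the fan triangulations $\ffan[i]$ from Example \ref{fanmob}: these are maximal simplices of $\ac{\mob}$ and, being built entirely from c-arcs, are genuine facets of the subcomplex $\across=\ack C{\mob}$.

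The key observation, which I would isolate as a short claim, is that each fan triangulation has a clean combinatorial description: $\ffan[i]$ consists \emph{precisely} of the c-arcs incident to the vertex $i$, namely the maximal c-arc $L_i$ together with the arcs $(i,j)$ for all $j\in\intbra\smallsetminus\{i\}$. These are $n$ pairwise disjoint c-arcs, matching the dimension $n-1$ of $\across$, and conversely every c-arc with an endpoint at $i$ occurs among them. This description follows directly from the definition of $\ffan[i]$ in Example \ref{fanmob}.

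With this in hand the conclusion is a one-line counting argument. Suppose, for contradiction, that $\across$ were a cone with apex some c-arc $\alpha$. Then $\alpha$ would belong to every facet, in particular to $\ffan[i]$ for every $i\in\intbra$; by the description above this forces $\alpha$ to be incident to each of the vertices $1,\ldots,n$. But an arc has exactly two endpoints, hence is incident to at most two vertices, whereas $n\geq 4>2$. This contradiction shows that $\bigcap_{i=1}^n\ffan[i]=\varnothing$, so no vertex lies in all facets and $\across$ is not a cone.

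There is no serious obstacle here; the argument in fact goes through already for $n\geq 3$, and $n\geq 4$ is comfortably within range. The only point requiring a little care is the bookkeeping in the second paragraph: one must confirm both that each $\ffan[i]$ really is a facet of $\across$ (and not merely of $\ac{\mob}$) and that $\ffan[i]$ contains no c-arc lacking an endpoint at $i$. Once the fan triangulations are pinned down as \textbf{all c-arcs through a fixed vertex}, the emptiness of the common intersection of the $n$ fans is forced purely by the two-endpoints constraint.
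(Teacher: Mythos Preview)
Your argument is correct. Both your proof and the paper's hinge on the same elementary fact---a c-arc is incident to at most two vertices, while $n\geq 4$ (indeed $n\geq 3$) supplies a third---but you package it differently. The paper works with the edge-level characterization of a cone (an apex must be \emph{disjoint} from every other c-arc) and, given a candidate apex $(1,j)$, exhibits a single intersecting arc, namely the maximal c-arc $L_{i'}$ at any vertex $i'\notin\{1,j\}$. You instead use the facet-level characterization (an apex must lie in every maximal simplex) and test against the fan triangulations $\ffan[i]$, reducing the problem to the observation that $\bigcap_i \ffan[i]=\varnothing$ by endpoint counting. Your route is marginally slicker in that it avoids checking any geometric intersection; the paper's route is more hands-on and produces an explicit crossing arc, which is in the spirit of the later lemmas where such concrete witnesses are repeatedly needed.
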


\begin{figure}[th]
\centering
\includegraphics[width=10cm]{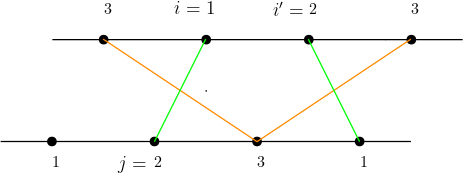}
\caption{ Proposition \ref{acrossnotacone}: $j<i'\leq n$}
\label{acrossnotcone}
\end{figure}
\begin{proof}
	We need to show that for $n\geq 4$, there does not exist any c-arc that is disjoint from every other c-arc. We prove this by contradiction. If possible, let $(i,j)$ be such an arc, with $1\leq i\leq j\leq n$. WLOG, we can assume that $i=1$. Furthermore, since any two maximal c-arcs intersect, we can assume that $1<j$. Since $n\geq 3$, there exists a vertex $i'$ such that either $1<i'<j$ or $j<i'\leq n$. See Fig. \ref{acrossnotcone}. Then the arc $L_{i'}$ intersects $(1,j)$. Hence there cannot be any c-arc that is disjoint from every other c-arc.  
\end{proof}

\begin{prop}\label{bnotdom}
For $n\geq 2$, there does not exist any b-arc that is disjoint from every c-arc of $\mob$. 
\end{prop}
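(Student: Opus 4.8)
The plan is to show that every b-arc is forced to cross some maximal c-arc, so that no b-arc can be disjoint from all of them. I will use two facts recalled earlier. By Definition~\ref{boundary}, a b-arc $a$ decomposes $\mob$ into an orientable polygonal tile and a non-orientable tile, and, as recalled just after Definition~\ref{boundary}, every c-arc meets the one-sided core curve; moreover this core curve lies entirely inside the non-orientable tile of $a$. Consequently, any arc whose two endpoints lie on the boundary of the polygonal tile of $a$ must leave that tile in order to reach the core curve, and since an arc meets $\partial\mob$ only at its endpoints, it can only exit the polygonal tile by crossing $a$.

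First I would locate, on the boundary of the polygonal tile of $a$, a vertex $v$ of $\mob$ distinct from the endpoints of $a$. If $a = M_i$ is the maximal b-arc at the vertex $i$ (Definition~\ref{maximal}), its polygonal tile is a $\poly{n+1}$ whose boundary carries all $n$ vertices, so for $n \geq 2$ any vertex $v \neq i$ qualifies. Otherwise $a$ joins two distinct vertices; following the notation preceding Figure~\ref{notation}, its polygonal tile is a $\poly{k}$ which, since $a$ is non-trivial, cannot be a bigon and therefore has $k \geq 3$ boundary vertices. Hence its boundary contains a vertex $v$ strictly between the two endpoints of $a$.

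Next I take the maximal c-arc $L_v$ based at $v$. Its two endpoints sit at $v$, hence on the boundary of the polygonal tile, while $L_v$ crosses the core curve, which lies in the non-orientable tile. Representing $a$ and $L_v$ by geodesics for a convex hyperbolic metric so that their intersection number is minimal, the preceding paragraph forces $L_v$ to exit and re-enter the polygonal tile across $a$, so $[a]$ and $[L_v]$ cannot be disjoint. As $a$ was an arbitrary b-arc, this proves that no b-arc is disjoint from every c-arc.

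The step I expect to be delicate is verifying that the crossing of $a$ by $L_v$ is essential rather than an artifact removable by homotopy. This is exactly where the two recalled facts carry the argument: the core curve is homotopically essential and confined to the non-orientable tile, so an arc that is obliged to cross it cannot be homotoped off $a$ without simultaneously being pulled off the core. The bigon criterion for geodesics in minimal position then certifies a genuine, non-removable intersection point of $a$ and $L_v$.
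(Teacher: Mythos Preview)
Your proof is correct and follows essentially the same route as the paper: pick a vertex $v$ in the polygonal tile of the given b-arc that is not an endpoint of that arc, and observe that a c-arc through $v$ must cross the b-arc because it has to reach the core curve in the non-orientable tile. The paper's version is terser (it simply asserts that any c-arc with an endpoint at such a vertex intersects the b-arc), while you add the justification via the core curve and minimal-position/bigon considerations and single out the specific c-arc $L_v$; but the underlying argument is the same.
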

\begin{proof}
Any b-arc separates a polygon $\poly k$ from the surface, where $3\leq k\leq n$. In particular, there is a vertex $i$ on $\poly k$ that is different from the endpoints of the given $b$-arc. Any c-arc with an endpoint on $i$ intersects this b-arc. This concludes the proof. 
\end{proof}
\begin{prop}\label{notacone}
For $n\geq 3$ and $k\in \intbra$, the complex $\ac\mob \smallsetminus \{ [M_1], \ldots, [M_k]\}$ is not a cone.
\end{prop}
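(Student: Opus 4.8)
The plan is to combine the two preceding propositions with the characterization of cones in a flag complex. By the very definition of the arc complex, a collection of arcs spans a simplex precisely when they are pairwise disjoint, so $\ac\mob$ is a flag complex. Consequently, a subcomplex $X\subseteq \ac\mob$ is a cone if and only if it admits an \emph{apex}: a vertex $[v]$ whose underlying arc $v$ is disjoint from every other arc representing a vertex of $X$. Indeed, if $v$ is disjoint from all other vertices, then by flagness every simplex of $X$ can be joined to $[v]$, giving $X=[v]\Join (X\smallsetminus\{[v]\})$; conversely any apex must be disjoint from all vertices, for otherwise it could not be joined to an arc it crosses. Writing $X:=\ac\mob \smallsetminus \{[M_1],\ldots,[M_k]\}$, I would first record the crucial structural feature: the only vertices deleted are maximal b-arcs, so \emph{every} c-arc of $\mob$ remains a vertex of $X$.

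I would then rule out every candidate apex. If $[v]$ is a c-arc, Proposition \ref{acrossnotacone} furnishes a c-arc crossing $v$; if $[v]$ is one of the remaining b-arcs, Proposition \ref{bnotdom} furnishes a c-arc crossing $v$. In both cases the intersecting witness is a c-arc, and this is exactly the point that makes the argument robust under the deletion of the $M_i$: since no c-arc is ever removed, the witness still belongs to $X$. Hence no vertex of $X$ can serve as an apex, and $X$ is not a cone.

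The one delicate point is the boundary case $n=3$, since Proposition \ref{acrossnotacone} is stated only for $n\geq 4$. There I would check directly that no c-arc of $\across[3]$ is disjoint from all the others: the three maximal c-arcs $L_1,L_2,L_3$ pairwise intersect, and each simple c-arc $(i,j)$ is crossed by $L_{i'}$, where $i'$ is the remaining vertex, exactly as in the proof of Proposition \ref{acrossnotacone} (whose argument in fact only invokes $n\geq 3$). Once this case is absorbed, the main conceptual step is the deletion-robustness observation that all intersection witnesses are c-arcs and therefore survive in $X$; everything else is an immediate application of the two preceding propositions, so I expect no further obstacle.
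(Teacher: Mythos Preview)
Your argument is correct and is essentially the paper's own approach spelled out in full: the paper's proof consists of the single sentence ``This follows from the proofs of Propositions \ref{acrossnotacone}, \ref{bnotdom},'' and your flag-complex/apex analysis together with the deletion-robustness observation (the intersecting witnesses are always c-arcs, hence survive the removal of the $M_i$) is exactly what that sentence unpacks to. Your separate treatment of $n=3$ is also in line with the paper, which cites the \emph{proofs} rather than the statements precisely because the argument for Proposition \ref{acrossnotacone} already works from $n\geq 3$ on.
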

\begin{proof}
This follows from the proofs of Propositions \ref{acrossnotacone}, \ref{bnotdom}.
\end{proof}

\begin{lem}\label{firstcoll} 
For every $n\geq 1$, the only vertex-dominated 0-simplices in $\ac\mob$ are given by $[M_i]$, for $i\in \intbra$.
\end{lem}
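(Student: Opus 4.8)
The strategy is to compute the link of every $0$-simplex of $\ac{\mob}$ and to decide in each case whether it is a cone, since by the definition of domination $[a]$ is vertex-dominated if and only if $\Link{[a]}{\ac{\mob}}$ is a cone (the apex of that cone being a vertex that dominates $[a]$). Exactly as in the link computations in the proofs of Theorems \ref{strcollhole} and \ref{mobboundarycollapse}, if $a$ decomposes $\mob$ into tiles $\del_1,\ldots,\del_p$, then
\[
\Link{[a]}{\ac{\mob}}\cong \ac{\del_1}\Join\cdots\Join\ac{\del_p},
\]
because any arc disjoint from $a$ lies in a single tile and arcs in distinct tiles are automatically disjoint. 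I would then use the elementary observation that such a join is a cone if and only if some factor $\ac{\del_i}$ is a cone: if the join equals $v\Join L$, its apex $v$ belongs to a unique factor $\ac{\del_i}$ and, lying in every maximal simplex of the join, must lie in every maximal simplex of $\ac{\del_i}$, forcing $\ac{\del_i}$ to be a cone; the converse is immediate. This is the ``cone'' analogue of Theorem \ref{barmak}\ref{scjoin}, and it is precisely what lets the distinction between domination and mere strong collapsibility do the work.

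I would first dispose of the c-arcs. A single c-arc crosses the one-sided core curve, so cutting along it produces a single connected orientable tile, namely the polygon $\poly{n+2}$; hence $\Link{[a]}{\ac{\mob}}\cong\ac{\poly{n+2}}$, which is the sphere $\s{n-2}$ by the sphericity of the arc complex of a convex polygon. A sphere is never a cone, so no c-arc (neither a general $(i,j)$ nor a maximal $L_i$) is dominated for $n\geq 2$; the case $n=1$ is the single vertex $[L_1]$, which is trivially undominated.

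It remains to treat the b-arcs. Cutting along a b-arc produces exactly one polygon and one non-orientable trunk $\mob[k']$, and the polygon factor is a sphere, hence never a cone; so by the join criterion $[a]$ is dominated if and only if $\ac{\mob[k']}$ is a cone. A maximal b-arc $M_i$ gives $k'=1$, and as already computed $\Link{[M_i]}{\ac{\mob}}=\ac{\poly{n+1}}\Join[L_i]$ is a cone with apex $[L_i]$, which recovers the domination of $[M_i]$ by $[L_i]$ and produces the claimed exceptions. A non-maximal b-arc gives $k'\geq 2$, and I must show $\ac{\mob[k']}$ is not a cone. For $k'\geq 3$ this follows from Propositions \ref{acrossnotacone} and \ref{bnotdom}: together they show that no arc of $\mob[k']$ is disjoint from every c-arc, so in particular no arc is disjoint from all other arcs and $\ac{\mob[k']}$ has no apex.

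The only trunk not covered by those propositions is $\mob[2]$ (which occurs precisely for the b-arc $a_1^n$, up to relabeling of the vertices), since the proof of Proposition \ref{acrossnotacone} breaks down at two vertices. Here I would compute $\ac{\mob[2]}$ by hand: its five vertices are $(1,2),L_1,L_2,M_1,M_2$, and, using that two maximal c-arcs always meet and that $\Link{[M_i]}{\ac{\mob[2]}}=[L_i]$, the disjointness graph is the path $[M_1]-[L_1]-[(1,2)]-[L_2]-[M_2]$; no vertex of a path on five vertices is adjacent to all others, so $\ac{\mob[2]}$ is not a cone. I expect this small verification, together with the precise identification of the tiles produced by each family of arcs, to be the only delicate point; the remaining reductions are clean. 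The conceptual crux, and where care is needed throughout, is that one must everywhere distinguish \emph{cone} from \emph{strongly collapsible}: for instance $\ac{\mob[2]}$ is strongly collapsible yet not a cone, so each link has to be shown to genuinely fail to be a cone rather than merely to collapse.
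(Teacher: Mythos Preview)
Your proof is correct and follows the same route as the paper: compute each link as a join of arc complexes of the tiles and decide whether it is a cone. You are in fact more thorough at the one delicate point. For a non-maximal b-arc the paper simply asserts that since neither factor of the join $\ac{\mob[m]}\Join\ac{\poly k}$ is a $0$-simplex the link is not a cone; taken literally this does not follow, and what is really needed is that $\ac{\mob[m]}$ itself fails to be a cone for $m\geq 2$. You supply exactly this, via the explicit path computation for $m=2$ and Propositions~\ref{acrossnotacone} and~\ref{bnotdom} for $m\geq 3$. The only quibble is that Proposition~\ref{acrossnotacone} is stated for $n\geq 4$, so at $m=3$ you are tacitly using that its proof already works for $n=3$; this is harmless (the paper itself does the same in Proposition~\ref{notacone}), but worth flagging.
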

\begin{proof}
In the base step of the proof of Theorem \ref{mobboundarycollapse}, we showed that for every $i\in\intbra$, the 0-simplex $[M_i]$ is vertex-dominated by $[L_i]$.  Next, the link in $\ac\mob$ of any c-arc is a sphere, so it cannot be a cone. So no c-arc can be vertex-dominated. 
Finally consider any non-maximal b-arc $a$. It decomposes the surface into two tiles homeomorphic to $\mob[m]$ and $\poly k$, with $m\in \intbra[2,n]$ and $k\in \intbra[3,n]$. Therefore, $\Link{[a]}{\ac\mob}= \ac{\mob[m]}\Join \ac{\poly k}$. Since $a$ is not a maximal arc, we have that $m>1$ and so neither of the smaller arc complexes are a 0-simplex. Hence, $\Link{[a]}{\ac\mob}$ is not a cone. 
\end{proof}
\begin{figure}[th]
	\centering
	\includegraphics[width=10cm]{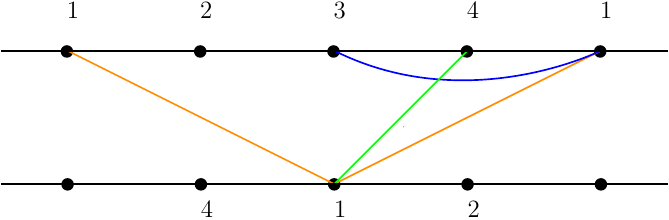}
	\caption{ Proposition \ref{secondcollapse}: $[L_1]$ is not vertex-dominated in $X_1$.}
	\label{longnotvdom}
\end{figure}

\begin{lem}\label{secondcollapse}
Let $n\geq 2$ and $j_0\in \intbra$. Then the only vertex-dominated 0-simplices of $ X_1:=\ac\mob \smallsetminus\{[M_{j_0}]\}$ are $\{[M_j] \mid j\in \intbra\setminus \{j_0\}\}$.
\end{lem}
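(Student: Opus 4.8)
The plan is to run through every $0$-simplex $v$ of $X_1$ and decide whether $\Link{v}{X_1}$ is a cone, leaning on the classification of links already established in Lemma~\ref{firstcoll} together with one simple observation: deleting $[M_{j_0}]$ only changes the link of $v$ when the arc underlying $v$ is disjoint from $M_{j_0}$. Since $M_{j_0}$ decomposes $\mob$ into a polygon $\poly{n+1}$ and a trunk $\mob[1]$ carrying the vertex $j_0$, the arcs disjoint from $M_{j_0}$ are exactly $L_{j_0}$ (the unique c-arc lying in the trunk) together with the non-maximal b-arcs contained in the polygonal tile; in particular every maximal b-arc $M_j$ with $j\neq j_0$ crosses $M_{j_0}$. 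This immediately separates the problem into an easy part (arcs unaffected by the deletion) and the genuinely new case of $L_{j_0}$.

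First I would dispose of the easy cases. For $j\neq j_0$ the arc $M_j$ meets $M_{j_0}$, so $\Link{[M_j]}{X_1}=\Link{[M_j]}{\ac\mob}=\s{n-3}\Join[L_j]$ is still a cone and $[M_j]$ remains dominated by $[L_j]$; this produces exactly the claimed family. For any c-arc other than $L_{j_0}$ the deletion leaves its link, a sphere by Lemma~\ref{firstcoll}, untouched, so it is not dominated. For a non-maximal b-arc $a$ with tiles $\mob[m]$ and $\poly k$ (so $m\ge 2$): if $a$ meets $M_{j_0}$ the link is unchanged and non-conical as in Lemma~\ref{firstcoll}; if $a$ is disjoint from $M_{j_0}$ then $M_{j_0}$ lies in the non-orientable tile, so $\Link{a}{X_1}=\bigl(\ac{\mob[m]}\smallsetminus\{[M_{j_0}]\}\bigr)\Join\ac{\poly k}$. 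Here I would use that a join is a cone if and only if one of its factors is, that by Proposition~\ref{notacone} the first factor is not a cone, and that $\ac{\poly k}$ is a (non-conical) sphere; hence the link is not a cone and $a$ is not dominated.

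The substantive case is $v=[L_{j_0}]$, which is precisely the content of Figure~\ref{longnotvdom}. Cutting along $L_{j_0}$ yields a polygon, so $\Link{[L_{j_0}]}{\ac\mob}=\ac{\poly{n+2}}=\s{n-2}$, and $M_{j_0}$ becomes a single diagonal $d$ of this polygon; therefore $\Link{[L_{j_0}]}{X_1}=\ac{\poly{n+2}}\smallsetminus\{d\}$, a once-punctured sphere, and I must show it is not a cone. The maximal simplices of $\ac{\poly{n+2}}\smallsetminus\{d\}$ are exactly the triangulations of $\poly{n+2}$ avoiding $d$, so a diagonal $w$ could be an apex only if it lay in every $d$-avoiding triangulation. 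The key step rules this out: for any diagonal $w\neq d$, the (at most four) endpoints of $d$ and $w$ leave a free vertex $p$ of $\poly{n+2}$ as soon as $n+2\ge 5$, and the fan triangulation based at $p$ avoids both $d$ and $w$. Thus no diagonal is an apex and $[L_{j_0}]$ is not dominated.

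I expect this last step to be the main obstacle — not for computational reasons, but because it is exactly the point where the distinction between a ball and a cone must be detected and where the lower bound on $n$ is consumed; the earlier cases are bookkeeping once the ``disjointness'' reduction is in place. The very smallest instances (where $\poly{n+2}$ is too small for a free vertex $p$ to exist, so the fan argument degenerates) I would verify directly. Assembling the four cases then shows that the vertex-dominated $0$-simplices of $X_1$ are precisely $\{[M_j]\mid j\in\intbra\smallsetminus\{j_0\}\}$.
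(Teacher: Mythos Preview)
Your argument mirrors the paper's: the same reduction to arcs disjoint from $M_{j_0}$, the same join decomposition for non-maximal b-arcs, and your fan-triangulation treatment of $[L_{j_0}]$ (showing that $\ac{\poly{n+2}}\smallsetminus\{d\}$ has no apex) is a clean repackaging of the paper's explicit construction of arcs disjoint from $L_{j_0}$ that cross a given candidate dominator.

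There is one small gap. For a non-maximal b-arc $a$ disjoint from $M_{j_0}$ you appeal to Proposition~\ref{notacone} to conclude that $\ac{\mob[m]}\smallsetminus\{[M_{j_0}]\}$ is not a cone, but that proposition is stated only for $m\ge 3$. The case $m=2$ genuinely occurs (whenever $a$ cuts off a $\poly{n}$ and leaves a trunk $\mob[2]$ containing $j_0$), and your argument does not cover it. The paper handles this case separately by direct inspection of $\ac{\mob[2]}$ via Figure~\ref{acmob}: after deleting $[M_{j_0}]$ one is left with a path on four vertices, which is not a cone. Once you add this check your argument is complete and coincides with the paper's.
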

\begin{proof} WLOG, we can assume that $j_0=1$.
The links of all the arcs that intersect $M_1$ remain unchanged after the removal of $[M_1]$ from $\ac\mob$. In particular, the 0-simplices $[M_2], \ldots, [M_n]$ remain vertex-dominated, while the 0-simplices corresponding to every other c-arc intersecting $M_1$, remain non-dominated. Now we consider the arcs that are disjoint from $M_1$:
\begin{enumerate}
\item the maximal c-arc $L_1$,
\item the b-arcs in the polygonal tile of $M_1$.
\end{enumerate}
Firstly, we show that $[L_1]$ cannot be vertex-dominated. We prove this by contradiction. See Fig. \ref{longnotvdom}. Suppose that there exists an arc $\al$ which is disjoint from any arc that is disjoint from $L_1$. If the arc $\al$ is a c-arc then it must be of the form $\al=(1, j)$ where $j\in \intbra[2,n].$ But the b-arcs $a_{j-1}^{j+1}, a_{j-1}^1$ are both disjoint to $L_1$ and at least one of them intersects $a$.  In Fig. \ref{longnotvdom}, we have $j=4=n$; the b-arc $a_3^1$ intersects the c-arc $(1,4)$. So we have a contradiction. Next we suppose that $\al$ is a b-arc. Then it separates a vertex, say $j$ from the rest of the surface, where $j\in \intbra[2,n]$. Then the c-arc $(1,j)$ is disjoint from $L_1$ and intersects $\al$. So the 0-simplex $[L_1]$ cannot be vertex-dominated in $X_1$. 

Finally,  we check for vertex-domination in the case of b-arcs that are disjoint from $M_1$. The link of such an arc $\be$ in $X_1$ becomes
 $$\Link{[\be]}{X_1}= \left(\ac{\mob[m]}\smallsetminus [M_1]\right)\Join \ac{\poly k} . $$ If $m\geq 3$, then from Proposition \ref{notacone}, we know that the complex $\ac{\mob[m]}\smallsetminus [M_1] $ is not a cone. So the 0-simplex $[b]$ is not vertex-dominated. Since $\be\neq M_1$, we have $m>1$. So we need to treat the remaining case $m=2$.  From Fig. \ref{acmob}, we conclude that the complex $\ac{\mob[m]}\smallsetminus [M_1] $ is not a cone.
 \end{proof}

\begin{lem}\label{vdomaij}
Let $n\geq 4$ and $\mathcal J_n:=\{(i, i+1) \mid  i\in \intbra[1,n-1] \} \cup \{(n,1)\}$. Let $I \subset \intbra[1,n]$ of size at least 2. Then the 0-simplices that are vertex-dominated in   $\ac{\mob} \smallsetminus\{[M_k]\mid k\in I\}$ are given by $\{[M_{j}]\}$ for $j\in \intbra\setminus I$ and $\{a_{j}^i\}$ for $(i,j)\in \mathcal J_n\cap I\times I$. 
\end{lem}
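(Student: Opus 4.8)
The plan is to use the elementary criterion that a $0$-simplex $[a]$ of a complex $K$ is vertex-dominated if and only if $\Link{[a]}{K}$ is a cone, together with the fact that a join $X\Join Y$ is a cone if and only if one of its factors is a cone. Writing $K:=\ac{\mob}\smallsetminus\{[M_k]\mid k\in I\}$, I would run through the three families of $0$-simplices of $K$ — the surviving maximal $b$-arcs $[M_j]$ with $j\in\intbra\setminus I$, the $c$-arcs, and the non-maximal $b$-arcs — and compute each link after the deletion. The computation is kept local by two disjointness facts: since every maximal $b$-arc cuts off a $\mob[1]$ containing the crosscap, no two distinct maximal $b$-arcs are disjoint; and a maximal $b$-arc $M_k$ is disjoint from a $c$-arc only when that $c$-arc is the maximal $c$-arc $L_k$ based at the same vertex (any other $c$-arc crosses the crosscap encircled by $M_k$). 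Hence deleting the $[M_k]$ affects very few links.

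First I would dispose of the surviving maximal $b$-arcs and the $c$-arcs. For $j\in\intbra\setminus I$, Lemma \ref{firstcoll} gives $\Link{[M_j]}{\ac{\mob}}=\ac{\poly{n+1}}\Join[L_j]$, a cone with apex $[L_j]$; none of the deleted vertices lies in this link (they wind around the crosscap, so lie in neither factor), so the link is unchanged in $K$ and $[M_j]$ stays dominated by $[L_j]$. For a non-maximal $c$-arc $(i,j)$, every $M_k$ meets it, so its link is untouched by the deletion and remains the sphere of Lemma \ref{firstcoll}, which is not a cone. For a maximal $c$-arc $L_i$: if $i\notin I$ its link is unchanged and is again that non-cone sphere; if $i\in I$, the only deleted vertex in its link is $[M_i]$, so $\Link{[L_i]}{K}$ equals the link of $[L_i]$ in $\ac{\mob}\smallsetminus\{[M_i]\}$, which is not a cone by Lemma \ref{secondcollapse}. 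Hence no $c$-arc is dominated in $K$.

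The core of the argument is the non-maximal $b$-arcs. Such an arc $\be$ splits $\mob$ into a non-orientable tile $\mob[m]$ and a polygon $\poly k$ with $m\geq2$ and $k\geq3$, and
\[
\Link{[\be]}{K}=\left(\ac{\mob[m]}\smallsetminus\{[M_\ell]\mid \ell\in I,\ \ell\ \text{a vertex of}\ \mob[m]\}\right)\Join\ac{\poly k}.
\]
Here $\ac{\poly k}$ is a sphere when $k\geq4$ and empty when $k=3$, so it is never a cone; therefore $[\be]$ is dominated if and only if the non-orientable factor is a cone. When $m\geq3$, Proposition \ref{notacone} (after relabelling the vertices of $\mob[m]$) shows this factor is never a cone, so $[\be]$ is not dominated. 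When $m=2$ the arc $\be$ must join two cyclically adjacent vertices $i,i+1$, that is $\be=a_{i+1}^i$ (the wrap-around $a_1^n$ covering the pair $(n,1)$), and the non-orientable factor is $\ac{\mob[2]}$ with at most two vertices deleted. I would identify $\ac{\mob[2]}$ explicitly as the path with successive vertices $[M_i],[L_i],[(i,i+1)],[L_{i+1}],[M_{i+1}]$, a combinatorial $1$-ball whose two boundary vertices are $[M_i]$ and $[M_{i+1}]$. Deleting both endpoints — which happens exactly when $i,i+1\in I$, i.e. $(i,i+1)\in\mathcal J_n\cap I\times I$ — leaves the subpath $[L_i],[(i,i+1)],[L_{i+1}]$, namely the cone $[(i,i+1)]\Join\{[L_i],[L_{i+1}]\}$, so $\be$ is then dominated by $[(i,i+1)]$; deleting at most one endpoint leaves a path on at least four vertices, which is not a cone, and since $\ac{\poly n}$ is a genuine sphere for $n\geq4$ the arc $\be$ is not dominated.

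Assembling the three families yields exactly the asserted list of vertex-dominated $0$-simplices: $[M_j]$ for $j\in\intbra\setminus I$ and $a_j^i$ for $(i,j)\in\mathcal J_n\cap I\times I$. The step I expect to be the genuine obstacle is the $m=2$ analysis: rigorously pinning down the cell structure of $\ac{\mob[2]}$ as the stated $5$-vertex path, and, underlying all three families, the disjointness bookkeeping recording which maximal $b$-arcs meet which $c$-arcs and which arcs survive in each tile. Once these combinatorial facts are in place, the rest is a mechanical application of the cone/join criterion together with Lemmas \ref{firstcoll} and \ref{secondcollapse} and Proposition \ref{notacone}.
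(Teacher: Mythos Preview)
Your proposal is correct and follows essentially the same route as the paper: compute each link as a join of a non-orientable-crown factor with a polygonal factor, and decide domination by checking which factor (if any) is a cone, invoking Lemma~\ref{firstcoll}, Lemma~\ref{secondcollapse}, and Proposition~\ref{notacone}. Your organisation is slightly cleaner than the paper's --- you run systematically over the three families of $0$-simplices and split the non-maximal $b$-arc case by the value of $m$, and you make the $m=2$ step explicit by identifying $\ac{\mob[2]}$ as the $5$-vertex path $[M_i],[L_i],[(i,i+1)],[L_{i+1}],[M_{i+1}]$, whereas the paper reads this off from Figure~\ref{acmob} and writes the resulting deleted complex as $\mathbb{S}^0\Join[(i,i+1)]$. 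One small wording point: your appeal to Proposition~\ref{notacone} ``after relabelling'' is not quite how that proposition is stated (it removes $[M_1],\ldots,[M_k]$), but its proof applies verbatim to the deletion of any subset of the maximal $b$-arcs, which is what you actually need.
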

\begin{proof}
Let $X_I:=\ac{\mob} \smallsetminus\{[M_k]\mid k\in I\ $.
Since any two maximal b-arcs always intersect, we have that for every $j\in \intbra\setminus I$, $\Link{[M_j]}{X_1}=\Link {[M_j]}{\ac \mob}$, which is a cone. So the 0-simplices $[M_j]$, $j\in \intbra\setminus I$, remain vertex-dominated. 
 It suffices to check for vertex-dominated arcs that are disjoint from the maximal b-arcs that were removed:
\begin{enumerate}
\item the b-arcs in the polygonal tile of $M_j$.
\item the maximal c-arcs $L_j$ for $j\in I$,
\end{enumerate}
\begin{figure}[th]
	\centering
	\includegraphics[width=12cm]{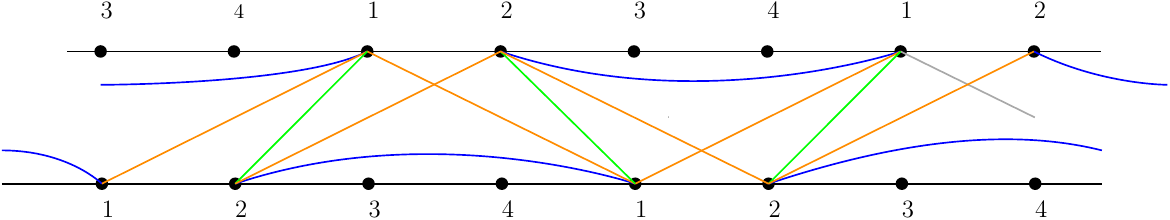}
	\caption{Lemma \ref{secondcollapse}: $[a_{21}]$ is vertex-dominated in $[(1,2)]$ in $X_I$.}
	\label{linka21}
\end{figure}

Firstly, we consider a 0-simplex $[a_{j}^i]$ with $(i,j)\in \mathcal J_n\cap I\times I$. We claim that it is vertex dominated by the 0-simplex $[(i,j)]$. In Fig \ref{linka21}, we have $n=4, i=1, j=2$. The arc $a_j^i$ decomposes the surface into a polygon $\poly p$ ($p\geq 3$) and a non-orientable crown $\mob[2]$, containing the two vertices $i,j$. So the link of this 0-simplex in $X_I$ is given by 
\begin{align*}
\Link{[a_j^i]}{X_I}&= \ac{\poly p}\Join \left(\ac{\mob [2]}\smallsetminus \{[M_i], [M_j]\}\right)\\
&= \s{p-4} \Join \s 0 \Join [(1,2)],
\end{align*}
where the 0-sphere corresponds to the two intersecting maximal c-arcs $L_i, L_j$.
Next, consider any b-arc in the polygonal tile of  $M_j$, with $j\in I$ such that neither $(j-1,j)$ nor $(j,j+1)$ lie in $ \mathcal J_n\cap I\times I$. Then the argument is same as that in the the second case of Lemma \ref{secondcollapse}. \\
Finally, consider the maximal c-arc $L_j$ for $j\in I$. Then its link is given by $\Link{[L_j]}{\ac\mob \smallsetminus [M_j]}$ which is not a cone from the proof of Lemma \ref{secondcollapse}.


\end{proof}

\begin{lem}\label{lemdecrease}
	Let $n, I, \mathcal J_n$ be as in Lemma \ref{vdomaij}. Let $J\subset \mathcal J_n\cap I\times I$. Then the 0-simplices that are vertex-dominated in $\ac{\mob} \smallsetminus\{[M_j]\mid j\in I \} \cup \{[a_j^i] \mid (i,j)\in J \}$ are given by $\{[M_{j}]\}$ for $j\in \intbra\setminus I$ and $\{a_{j}^i\}$ for $(i,j)\in \mathcal J_n\cap I\times I\setminus J$. 
\end{lem}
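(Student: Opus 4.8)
The plan is to prove the lemma by induction on the cardinality of $J$, using Lemma \ref{vdomaij} (the case $J=\emptyset$) as the base case. For the inductive step, write $Z_J := \ac{\mob}\smallsetminus(\{[M_j]\mid j\in I\}\cup\{[a_j^i]\mid(i,j)\in J\})$, choose $(i_0,j_0)\in(\mathcal J_n\cap I\times I)\smallsetminus J$, and set $J'=J\cup\{(i_0,j_0)\}$, so that $Z_{J'}=Z_J\smallsetminus\{[a_{j_0}^{i_0}]\}$. By the induction hypothesis the vertex-dominated $0$-simplices of $Z_J$ are exactly $\{[M_j]\mid j\notin I\}$ together with $\{[a_j^i]\mid(i,j)\in(\mathcal J_n\cap I\times I)\smallsetminus J\}$, and $[a_{j_0}^{i_0}]$ is one of the latter. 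It then suffices to show two things: (A) every dominated $0$-simplex of $Z_J$ other than $[a_{j_0}^{i_0}]$ remains dominated in $Z_{J'}$; and (B) no $0$-simplex that was undominated in $Z_J$ becomes dominated in $Z_{J'}$.

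Part (A) follows from a cone-persistence principle: if $\Link{[\alpha]}{Z_J}$ is a cone with apex $w$ and $w\neq[a_{j_0}^{i_0}]$, then deleting $[a_{j_0}^{i_0}]$ leaves a cone with the same apex $w$, provided $w$ survives. By Lemma \ref{vdomaij} the surviving dominated simplices $[M_j]$ and $[a_j^i]$ are dominated by the c-arcs $[L_j]$ and $[(i,j)]$ respectively; since these apexes are c-arcs while $[a_{j_0}^{i_0}]$ is a b-arc, none of them is deleted, and each link remains a cone. Hence every such simplex stays dominated.

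Part (B) is the crux. Deleting $[a_{j_0}^{i_0}]$ changes only the links of arcs disjoint from $a_{j_0}^{i_0}$. Now $a_{j_0}^{i_0}$ separates $\mob$ into a non-orientable tile $\mob[2]$ on the vertices $i_0,j_0$ and a polygon $\poly p$; since $i_0,j_0\in I$, the maximal b-arcs $M_{i_0},M_{j_0}$ are already absent, so the only surviving arcs disjoint from $a_{j_0}^{i_0}$ whose links change are the arcs of the tile $\mob[2]$, namely $L_{i_0}$, $L_{j_0}$ and $(i_0,j_0)$, together with the b-arcs lying in the polygonal tile $\poly p$. For the b-arcs I would use the join description $\Link{[\beta]}{Z_{J'}}=A\Join B$, with one factor a truncated copy of some $\ac{\mob[m]}$, and the elementary fact that a join is a cone if and only if one of its factors is; this reduces non-coneness to the non-coneness of the $\mob$-factor, which is supplied by Propositions \ref{acrossnotacone}, \ref{bnotdom} and \ref{notacone}. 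For the remaining c-arcs $L_{i_0},L_{j_0},(i_0,j_0)$, whose links are spheres with some vertices removed, I would argue directly, exactly as in Propositions \ref{acrossnotacone} and \ref{secondcollapse}, that after deleting $[a_{j_0}^{i_0}]$ at least two mutually intersecting arcs still survive in the link, so that no single surviving arc can serve as a cone apex.

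The main obstacle I expect is precisely this verification inside Part (B). The delicate point is that deleting vertices from a non-cone complex can in general \emph{create} a cone, so I cannot simply invoke non-coneness of $Z_J$; I must check case by case that the particular arc $a_{j_0}^{i_0}$ being removed is never the unique obstruction to some arc becoming a cone apex. For the b-arc links this is controlled once the $\mob$-factor is known not to be a cone, but Proposition \ref{notacone} as stated removes only maximal b-arcs, so I anticipate needing a mild strengthening that also permits removing the finitely many $a_j^i$ — or, equivalently, folding this into the same induction on $|J|$ applied to the smaller crown $\mob[m]$. The c-arc and maximal-c-arc cases should then succumb to the explicit geometric arguments already developed for $[L_1]$ in Lemma \ref{secondcollapse}.
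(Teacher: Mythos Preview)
Your proposal is correct and arrives at the same conclusion as the paper, but it is organised differently. The paper argues directly for an arbitrary pair $(I,J)$: it lists the four types of arcs whose links can differ from their links in $\ac{\mob}$ (the $L_j$ with $j\in I$, the c-arcs $(i,j)$ with $(i,j)\in J$, the b-arcs inside the polygonal tile of some $a_j^i$, and the b-arcs inside the polygonal tile of some $M_j$) and checks each type once. Your approach instead inducts on $|J|$ and localises the analysis to the single newly removed arc $a_{j_0}^{i_0}$; this is cleaner bookkeeping, but the arcs you then have to examine in Part~(B) --- $L_{i_0},L_{j_0},(i_0,j_0)$ and the b-arcs in the polygonal tile of $a_{j_0}^{i_0}$ --- are exactly the same families the paper treats, so the underlying case analysis coincides.

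The obstacle you flag (that Proposition~\ref{notacone} only deletes maximal b-arcs) is also the point the paper glosses over, and your suggested fix is precisely what the paper implicitly uses: since every arc removed from $X_{I,J}$ is a b-arc, all c-arcs survive, and then Propositions~\ref{acrossnotacone} and~\ref{bnotdom} together show that no surviving arc of the $\mob[m]$-factor (with $m\geq 3$) can be a cone apex, regardless of which b-arcs have been deleted. So no genuine strengthening is needed --- you just invoke \ref{acrossnotacone} and \ref{bnotdom} directly rather than routing through \ref{notacone}. One small imprecision: in Part~(B) for the c-arcs, exhibiting ``two mutually intersecting arcs in the link'' is not by itself enough to rule out a cone; you must show, as in Lemma~\ref{secondcollapse}, that \emph{every} candidate apex is intersected by some surviving arc. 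Your reference to that lemma indicates you have the right argument in mind.
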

\begin{proof}
	Denote $X_{I,J}:= \ac{\mob} \smallsetminus\{ [M_j]\mid j\in I \} \cup \{[a_j^i] \mid (i,j)\in J \}$.
It suffices to check for vertex-dominated 0-simplices in the links of the 0-simplices that were removed: $\{ [M_j]\mid j\in I \} \cup \{[a_j^i] \mid (i,j)\in J \}$. These correspond to 
\begin{enumerate}
\item the maximal c-arcs $L_j$ for $j\in I$,
\item the non-maximal c-arcs $ (i,j)$, for $i,j \in J$,
\item all the b-arcs in the polygonal tile of $a_j^i$, for $i,j \in J$,
\item all the b-arcs in the polygonal tile of $M_j$, for $j\in I$ such that neither $(j-1,j)$ nor $(j,j+1)$ lie in $ \mathcal J_n\cap I\times I$.
\end{enumerate}
Firstly, we show that the maximal c-arcs $L_j$ for $j\in I$ cannot be vertex-dominated.
Cutting the surface along $L_j$, we get a polygon $\poly {n+2}$.
The link of $[L_j]$ is given by \[
\Link{[L_j]}{X_{I,J}}= \ac{\poly {n+2}} \smallsetminus \left\{ 
\begin{array}{lc}
\{[M_j]\}, & \text{ when } j\in I \text{ and } (j-1, j), (j,j+1)\notin J,\\
\{[M_j], a_j^{j-1}\}, & \text{ when } (j-1,j)\in J \text{ and } (j,j+1)\notin J,\\
\{[M_j], a_{j+1}^{j}\}, & \text{ when } (j,j+1)\in J \text{ and } (j-1,j)\notin J,\\
\{[M_j], a_j^{j-1}, a_{j+1}^{j}\}, & \text{ when } (j-1, j), (j,j+1)\in J\\
\varnothing & \text{ otherwise.} 
\end{array}
\right.
\] Suppose that a c-arc vertex-dominates $L_j$. Then it is of the form $(j,i)$, for $i\in \intbra[1,j-1]$ or $(i,j)$ for $i\in \intbra[j+1,n]$. See Fig. \ref{decrease}. For $j<i< n$ or $i<n=j$, the b-arc $a_{i-1}^{i+1}$ intersects the arc $(j,i)$ or $(i,j)$ respectively. When $j<i=n$, the b-arc $a_{i+1}^{i-1} $ intersects $(j,i)$. Since $n\geq 4$, these b-arcs are different from the three arcs $M_j, a_{j}^{j-1}, a_{j+1}^{j}$. So we get that $L_j$ cannot be vertex dominated by a c-arc. We have already seen in the proof of Lemma \ref{secondcollapse}, that a b-arc cannot dominate a c-arc. So c-arcs $L_j$, for $j\in I$, cannot be vertex-dominated in $X_{I,J}$. 

\begin{figure}[th]
	\centering
	\includegraphics[width=14cm]{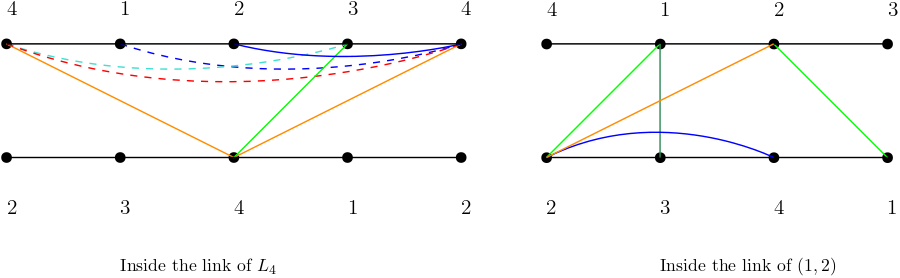}
	\caption{Lemma \ref{lemdecrease}}
	\label{decrease}
\end{figure}
Next we prove that the non-maximal c-arcs $ (i,j)$, for $i,j \in J$, cannot be vertex-dominated.
Suppose that $(i,j)$ is vertex-dominated by another c-arc. See  the right panel of Fig. \ref{decrease}. Either $j=i+1$ or $j=n, i=1$. Suppose that the b-arc  is vertex-dominated by c-arc $\al$. Then at least one endpoint of $\al$ lies on either $i$ or $j$. WLOG, suppose that $\al$ has an endpoint on $i$. Then the maximal arc $L_j$ intersects $\al$. So $\al$ cannot vertex dominate $(i,j)$. We have already seen that a b-arc cannot vertex-dominate. Thus the $(i,j)$ cannot be vertex-dominated in $X_{I,J}$. 

Next suppose that $\al$ is a b-arc in the polygonal tile of $a_j^i$, for $i,j\in J$. It divides the surface into a polygon $\poly p$ and a non-orientable crown $\mob[m]$, with $m\geq 3$. Then its link is given by 
\[
\Link{[\al]}{X_{I,J}}= \ac{\poly p}\Join \left(\ac{\mob[m]}\smallsetminus \{ [M_k] \mid \al \in \Link{[M_k]}{\ac\mob} \}\cup \{[a_j^i] \mid \al\in \Link{[a_j^i]}{\ac\mob}   \}\right)
\]
From Propositions \ref{notacone}, \ref{bnotdom}, we get that neither a b-arc nor a c-arc can vertex-dominate $\al.$\\
Finally, the argument for the 4th type of arcs is identical to that in Lemma \ref{secondcollapse}.

This concludes the proof. 
\end{proof}

\begin{thm}
	For $n\geq 4$, the full arc complex $\ac{\mob}$ of the surface $\mob$ is not strongly collapsible. 
\end{thm}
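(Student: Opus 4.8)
The plan is to pin down the Matou\v sek core of $\ac{\mob}$ explicitly and to observe that it is not a single vertex. Recall that a finite complex is strongly collapsible if and only if its \emph{core}---the subcomplex obtained by iteratively deleting vertex-dominated $0$-simplices until none remain---is a point, and that by Matou\v sek \cite{LC} this core is \emph{unique}, independent of the order in which dominated vertices are removed. It therefore suffices to run the deletion process once, using Lemmas \ref{firstcoll}, \ref{vdomaij} and \ref{lemdecrease} to read off the dominated $0$-simplices at each stage, and to check that the terminal complex still carries more than one vertex. Uniqueness of the core is the ingredient that upgrades ``this particular deletion sequence gets stuck'' into ``no deletion sequence reaches a point''.

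First I would invoke Lemma \ref{firstcoll}: the only vertex-dominated $0$-simplices of $\ac{\mob}$ are the maximal $b$-arcs $[M_i]$ for $i\in\intbra$, each dominated by the maximal $c$-arc $[L_i]$. As the $L_i$ are not themselves deleted, Theorem \ref{barmak}\ref{simultcoll} permits removing all of them simultaneously, yielding $\ac{\mob}\smallsetminus\{[M_i]\mid i\in\intbra\}$. Next I would apply Lemma \ref{vdomaij} with $I=\intbra$: since $\intbra\smallsetminus I=\varnothing$, no surviving maximal $b$-arc is dominated, and the newly dominated $0$-simplices are exactly the short $b$-arcs $[a_j^i]$ with $(i,j)\in\mathcal J_n$, each dominated by the $c$-arc $[(i,j)]$. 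These $c$-arcs survive, so Theorem \ref{barmak}\ref{simultcoll} again applies and I may delete the whole batch at once, arriving at
$$X:=\ac{\mob}\smallsetminus\bigl(\{[M_i]\mid i\in\intbra\}\cup\{[a_j^i]\mid (i,j)\in\mathcal J_n\}\bigr).$$

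At this point Lemma \ref{lemdecrease} with $I=\intbra$ and $J=\mathcal J_n$ does the decisive work: its list of dominated vertices is indexed by $\intbra\smallsetminus I=\varnothing$ and by $\mathcal J_n\cap I\times I\smallsetminus J=\varnothing$, so $X$ possesses \emph{no} vertex-dominated $0$-simplex whatsoever. By uniqueness, $X$ is the core of $\ac{\mob}$. Finally I would note that the entire process deleted only $b$-arcs, so every $c$-arc survives; in particular $X$ contains the inner complex $\across$, which for $n\geq 4$ is not even a cone by Proposition \ref{acrossnotacone}, hence is certainly not a single vertex. Thus the core $X$ is not a point, and $\ac{\mob}$ is not strongly collapsible. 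The only genuinely delicate point in this assembly is ensuring that the two batched deletions really do reach the core rather than merely \emph{some} intermediate complex---and this is precisely what Lemma \ref{lemdecrease} certifies, by showing nothing remains dominated after the second batch, so that the heavy lifting has already been done in the preceding lemmas.
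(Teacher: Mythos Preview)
Your argument is correct and follows essentially the same route as the paper: identify the dominated vertices via Lemmas \ref{firstcoll}, \ref{vdomaij} and \ref{lemdecrease}, delete them, and observe that the remaining complex---the core, by Matou\v sek's uniqueness---still has more than one vertex. The only cosmetic differences are that you batch the deletions via Theorem \ref{barmak}\ref{simultcoll} (thereby bypassing Lemma \ref{secondcollapse}) and make the invocation of core uniqueness explicit, and you add the pleasant observation that the core contains $\across$, which is not even a cone by Proposition \ref{acrossnotacone}.
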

\begin{proof}
From Lemma \ref{firstcoll}, we get that the only way to strongly collapse the full arc complex is to remove the 0-simplex corresponding to some maximal b-arc $M_j$, $j\in \intbra$. From Lemma \ref{secondcollapse}, we get that the only way to strongly collapse the complex $\ac\mob \setminus \{M_j\}$ is to remove another maximal b-arc.. Next from Lemma \ref{vdomaij}, we get that if we remove two maximal b-arcs $M_i, M_j$ such that $j=i+1$ are consecutive or $i=n, j=1$, then the b-arc $a_j^i$ becomes vertex-dominated. From Lemma \ref {lemdecrease}, we get that we can remove the 0-simplices in $\mathcal D:=\{[M_j] \mid j \in \intbra \}\cup\{[a_j^i] \mid (i,j)\in \mathcal J_n\}$ in any order and we will always end up with the complex $\ac\mob \smallsetminus \mathcal D$. Finally, from Lemma \ref{lemdecrease}, we get that this complex is minimal. Thus the full arc complex is not strongly collapsible.
\end{proof}

\subsection{Integral strip}
In this section we show that the arc complex of an integral strip is strongly collapsible.
\begin{thm}\label{thmstrip}
	For $m,n\geq1$ and $m+n\geq 5$, the arc complex $\acs {m,n}$ is strongly collapsible. 
\end{thm}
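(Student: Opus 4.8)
The plan is to translate $\acs{m,n}$ into the order complex of a grid poset and then to peel off the boundary of the grid by a few rounds of simultaneous strong collapses, ending on a cone. The first step is to record the compatibility criterion for the arcs $(i,j)$: reading the cyclic order of the vertices on $\partial P(m,n)$ (blue $1$, red $1,\dots,n$, blue $m,\dots,2$) one checks that two arcs $(i,j)$ and $(i',j')$ cross exactly when their index pairs interleave, i.e. they are \emph{disjoint} if and only if $(i-i')(j-j')\ge 0$. Equivalently, $(i,j)$ and $(i',j')$ are disjoint iff they are comparable in the product order on $\{1,\dots,m\}\times\{1,\dots,n\}$. The only trivial red--blue arcs are the two boundary edges $(1,1)$ and $(m,n)$, so $\acs{m,n}$ is isomorphic to the order complex of the poset $Q:=\bigl(\{1,\dots,m\}\times\{1,\dots,n\}\bigr)\setminus\{(1,1),(m,n)\}$, a set of arcs spanning a simplex precisely when it is a chain of $Q$. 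Under this dictionary the domination criterion (an arc $a$ is dominated by $b$ iff every arc disjoint from $a$ is disjoint from $b$) reads: $(i,j)$ is dominated by $(i',j')$ iff every element of $Q$ comparable to $(i,j)$ is also comparable to $(i',j')$. I will also use the two symmetries of the picture: the transpose $(i,j)\mapsto(j,i)$, which identifies $\acs{m,n}$ with $\acs{n,m}$, and the rotation $\rho(i,j)=(m{+}1{-}i,\,n{+}1{-}j)$, which is an automorphism of $Q$; these let me prove one domination and obtain its three siblings for free.

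Assume first $m,n\ge 3$. The core computation is the single claim that, for $j\ge 3$, the arc $(1,j)$ is dominated by $(1,2)$: indeed every element comparable to $(1,j)$ lies either in the first column or has second coordinate $\ge j\ge 2$, hence is comparable to $(1,2)$. Applying the two symmetries yields the analogous dominations $(i,1)\preceq(2,1)$ for $i\ge 3$, $(i,n)\preceq(m{-}1,n)$ for $i\le m{-}2$, and $(m,j)\preceq(m,n{-}1)$ for $j\le n{-}2$. Every boundary element of the grid other than the four ``near-corners'' $(1,2),(2,1),(m{-}1,n),(m,n{-}1)$ is thus dominated by one of those four surviving vertices, so by Theorem~\ref{barmak}\ref{simultcoll} the complex strongly collapses onto the order complex of $Q':=\bigl(\{2,\dots,m{-}1\}\times\{2,\dots,n{-}1\}\bigr)\cup\{(1,2),(2,1),(m{-}1,n),(m,n{-}1)\}$. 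For the second round I claim each near-corner is now dominated by an interior corner: since $(2,2)$ is comparable (in $Q'$) to every element except possibly $(1,2),(2,1)$, one checks that the set of elements comparable to $(2,1)$, namely $Q'\setminus\{(1,2)\}$, is contained in that of $(2,2)$, so $(2,1)$ is dominated by $(2,2)$; symmetrically $(1,2)\preceq(2,2)$ and $(m{-}1,n),(m,n{-}1)\preceq(m{-}1,n{-}1)$. Collapsing these four (again via Theorem~\ref{barmak}\ref{simultcoll}) leaves the order complex of the full subgrid $\{2,\dots,m{-}1\}\times\{2,\dots,n{-}1\}$. This poset has a minimum $(2,2)$, equivalently the arc $(2,2)$ is disjoint from every remaining arc, so the complex is a cone and is strongly collapsible by Theorem~\ref{barmak}\ref{scjoin}.

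The remaining cases are degenerate and handled directly. If $\min(m,n)=1$, all admissible arcs share their unique blue (or red) endpoint, so $\acs{m,n}$ is a simplex and is strongly collapsible. If $\min(m,n)=2$, say $m=2$ and $n\ge 3$, the near-corners overlap and the three-step scheme must be shortened: the dominations $(1,j)\preceq(1,2)$ for $j\ge 3$ and (by $\rho$) $(2,j)\preceq(2,n{-}1)$ for $j\le n{-}2$ still hold in the original complex, and their dominators $(1,2),(2,n{-}1)$ survive, so one simultaneous collapse reduces $\acs{2,n}$ to the single edge on the comparable pair $\{(1,2),(2,n{-}1)\}$, which is strongly collapsible; the case $n=2$ follows by transpose.

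The argument is essentially bookkeeping once the grid-poset identification is in place, so the main point requiring care is not any single hard estimate but the global consistency of the simultaneous collapses: I must verify that at each stage every removed vertex is dominated by a vertex that is \emph{kept}, that no dominator is accidentally scheduled for removal, and that the four boundary dominations are applied in the configuration where the four near-corners are genuinely distinct (forcing the separate treatment of $\min(m,n)\le 2$). Establishing the disjointness-equals-comparability dictionary carefully, and checking that the interior subgrid is non-empty so that the final cone point $(2,2)$ exists, are the only places where an error could creep in.
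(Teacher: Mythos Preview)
Your proof is correct and follows a genuinely different route from the paper's. The paper proceeds by induction on $m+n$: it removes the arcs $(1,n{+}1),(2,n{+}1),\dots,(m,n{+}1)$ one at a time from $\acs{m{+}1,n{+}1}$, showing each is dominated (by $(i,n)$) in the intermediate complex, until what remains is $\acs{m{+}1,n}\Join[(m{+}1,n)]$, to which the induction hypothesis applies. You instead identify $\acs{m,n}$ with the order complex of the grid poset $\{1,\dots,m\}\times\{1,\dots,n\}$ with the two corners deleted, and then perform a bounded number of simultaneous collapses using the product-order symmetries, with no induction at all.

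The poset dictionary is a clean observation that makes every domination check a one-line comparability argument and also explains immediately why the bound $m+n\ge5$ is necessary (for $m=n=2$ the poset is a two-element antichain). One small redundancy: in your main case $m,n\ge3$, after the first round the vertex $(2,2)$ is in fact comparable to \emph{every} element of $Q'$, including the four near-corners $(1,2),(2,1),(m{-}1,n),(m,n{-}1)$, so the order complex of $Q'$ is already a cone with apex $(2,2)$ and your second round is unnecessary (though not incorrect). Conversely, the paper's inductive argument is shorter to state and needs no case split on $\min(m,n)$, at the cost of hiding the global structure that your argument makes explicit.
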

\begin{proof}
	For all $m\geq 1$ and $n\geq 1$, the arc complexes $\acs{m,1}$ and $\acs{1, n}$ are simplices of dimension $m-1$ and $n-1$, respectively. So they are strongly collapsible. See Fig. \ref{intstripbase}. Next we verify the statement for $m=3,n=2$. There are four arcs $(1,3), (1,2), (2,2), (2,1)$. The 0-simplices $(1,3)$ and $(2,1)$ are vertex-dominated by the 0-simplices $(1,2)$ and $(2,2)$, respectively. Removing the two former arcs we get a 1-simplex which is collapsible. This concludes the base step.
	
	Suppose that for $1\leq m'\leq m$, $1\leq n' \leq n$ and $m'+n'\geq 5$, the arc complexes $ \acs {m'+1, n'}  $ and $\acs {m', n'+1}$ are strongly collapsible. For the induction step, we need to show that the arc complex $\acs {m'+1, n'+1}$ is strongly collapsible. The idea is to strongly collapse all the new arcs in $P(m+1,n+1)$ have an endpoint on the red vertex $n+1$. The only other new arc is $(m+1, n)$ which forms a boundary edge for the strip $P(m+1,n)$. 
	
	For the ease of notation, let us denote $a_i:=[(i, n+1)]$, for all $i\in \intbra [1,m]$. Then we show that 
	\begin{align*}
	\acs{  m+1,n+1 }  &\Searrow \acs{m+1,n+1}\smallsetminus\{a_1\} \\
	& \Searrow \ldots \\
	&\Searrow \acs{m+1,n+1}\smallsetminus\{a_1, \ldots, a_m\}
	\end{align*}
	The arc $a_1$ decomposes the integral strip $P(m+1, n+1)$ into two smaller strips $P(1,n+1)$ and $P(m+1, 1)$. The link of the 0-simplex $[a_1]$ is given by $$\Link{a_1}{ \acs{m+1,n+1}}= \fan{{\color{blue}1}}{1, n}\Join \fan{\color{red}n+1}{1, m} , $$ which is a cone. So we get that $\acs{ m+1,n+1 }  \Searrow \acs{m+1,n+1} \smallsetminus \{a_1\}$.
	
	\begin{figure}[th!]
		\centering
		\includegraphics[width=10cm]{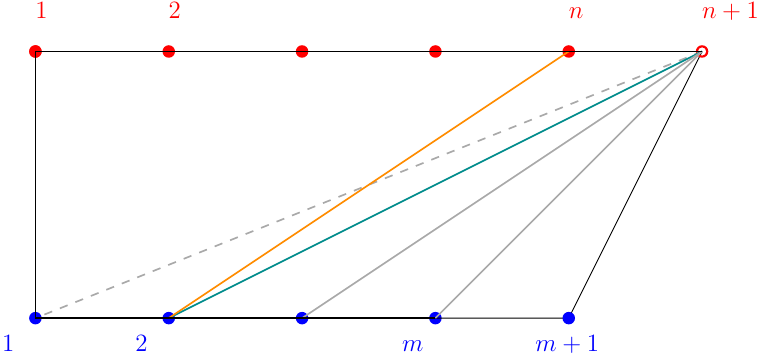}
		\caption{In Theorem \ref{thmstrip}, The 0-simplex $a_2=[(2,n+1)]$ is vertex-dominated by $[(2,n)]$.}
		\label{figstrip}
	\end{figure}
	
	Next suppose that for $2 \leq i'\leq i\leq m$,  $\ac{P(m+1,n+1)}\smallsetminus\{ a_1,\ldots, a_{i'-1} \} \Searrow \ac{P(m+1,n+1)}\smallsetminus\{ a_1,\ldots, a_{i'} \} $. 
	We need to show that $a_{i+1}$ is vertex-dominated in $\ac{P(m+1,n+1)}\smallsetminus\{ a_1,\ldots, a_i \}$.
	See Fig. \ref{figstrip}. The arc $a_{i+1}$ decomposes the integral strip $P(m+1, n+1)$ into two smaller strips $P( i+1, n+1)$ and $P(m+2-i, 1)$. Since then arcs $(i, n+1)$ have been removed, any triangulation of $P(i+1, n+1)$ must contain the arc $(i+1,n)$. So we get that the 0-simplex $a_{i+1}$ is vertex-dominated by $[(i+1,n)]$. This concludes the induction on $i. $
	Finally, we have that $\acs{m+1,n+1}\smallsetminus\{a_1, \ldots, a_m\}= \acs{m+1,n} \Join [(m+1, n)]$, which is strongly collapsible. This concludes the proof.
	
\end{proof}
	\newpage
	\bibliographystyle{plain}
	\bibliography{acholedmobius.bib}
	
\end{document}